\documentclass{amsart}

\usepackage{amssymb}
\usepackage{mathrsfs}
\usepackage{hyperref}
\usepackage[utf8]{inputenc}
\usepackage{verbatim}
\makeatletter
\newcommand{\oset}[3][0ex]{
  \mathrel{\mathop{#3}\limits^{
      \vbox to#1{\kern-2\ex@
      \hbox{$\scriptstyle#2$}\vss}}}}
\makeatother

\newtheorem{theorem}{Theorem}[section]
\newtheorem{thm}[theorem]{Theorem}
\newtheorem{lem}[theorem]{Lemma}

\newtheorem{cor}[theorem]{Corollary}

\newtheorem{conj}[theorem]{Conjecture}

\theoremstyle{definition}

\theoremstyle{remark}
\newtheorem{rem}[theorem]{Remark}

\numberwithin{equation}{section}

\newcommand{\FF}[1]{\mathbb F_{#1}}
\newcommand{\ZZ}{\mathbb Z}
\newcommand{\NN}{\mathbb N}

\newcommand{\sbs}{\subseteq}

\newcommand{\Hom}{\operatorname{Hom}}
\newcommand{\End}{\operatorname{End}}

\newcommand{\im}{\operatorname{Im}}
\newcommand{\diam}{\operatorname{diam}}
\newcommand{\Didiam}{\oset[-0.3ex]{\rightarrow}{\operatorname{diam}}}
\newcommand{\Tr}{\operatorname{Tr}}
\newcommand{\mc}{\mathcal C}

\newcommand{\mt}{\mathcal T}
\newcommand{\E}{E(\mt)}
\newcommand{\Gr}{\Gamma(\mt)}
\newcommand{\cdet}{\operatorname{det}_c}
\newcommand{\Pot}{\operatorname{Pot}}
\newcommand{\Cay}{\operatorname{Cay}}
\newcommand{\DiCay}{\oset[-0.3ex]{\rightarrow}{\operatorname{Cay}}}

\begin{document}
\title[Diameter of Cayley graphs of $SL(n,p)$]{Diameter of Cayley
  graphs of $SL(n,p)$ with generating sets containing a transvection}

\author{Zolt\'an  Halasi} 
\address{ Department of Algebra and Number Theory,
  E\"otv\"os University, P\'azm\'any P\'eter s\'et\'any 1/c, H-1117,
  Budapest, Hungary \and Alfr\'ed R\'enyi Institute of Mathematics,
  Re\'altanoda utca 13-15, H-1053, Budapest, Hungary\newline
  ORCID: \url{https://orcid.org/0000-0002-1305-5380}
} 
\email{zhalasi@caesar.elte.hu and halasi.zoltan@renyi.hu}

\date{\today}

\keywords{}
\subjclass[2010]{}

\thanks{This work on the project
  leading to this application has received funding from the European
  Research Council (ERC) under the European Union's Horizon 2020
  research and innovation programme (grant agreement No. 741420). The
  author was also supported by the National Research, Development and
  Innovation Office (NKFIH) Grant No.~K115799 and by the J\'anos
  Bolyai Research Scholarship of the Hungarian Academy of Sciences.  }
\begin{abstract}
  A well-known conjecture of Babai states that if 
  $G$ is a finite simple group and $X$ is a generating set of $G$, then 
  the diameter of the Cayley graph $\Cay(G,X)$ is bounded 
  above by $(\log |G|)^c$ for some absolute constant $c$.
  The goal of this paper is to prove such a bound for
  the diameter of $\Cay(G,X)$ whenever $G=SL(n,p)$ and $X$ is a
  generating set of $G$ which contains a transvection.   
  A natural analogue of this result is also proved for 
  $G=SL(n,K)$, where $K$ can be any field. 
\end{abstract}
\maketitle
\section{Introduction}
For a finite group $G$ and a generator set $X\subset G$ the
(undirected) Cayley graph $\Cay(G,X)$ is the connected graph with
vertex set $G$ and with edge set $\{(g,gx)\,|\,g\in G, x\in X\}$.  The
diameter of this graph is the smallest $k$ such that every element of
$G$ can be written as a product of at most $k$ elements from $X\cup
X^{-1}$.  The diameter of a group $G$, denoted by $\diam(G)$, is the
maximum of the diameters of all Cayley graphs $\Cay(G,X)$ where $X$
runs through all the generating sets of $G$. 
The following conjecture was fomalised by Babai \cite[Conjecture 1.7]{Babai92}.
\begin{conj}\label{conj:Babai}
  If $G$ is a non-Abelian finite simple group, then 
  $\diam(G)<(\log|G|)^c$ for some absolute constant $c$.
\end{conj}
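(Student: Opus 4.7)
Since Conjecture~\ref{conj:Babai} remains open in full generality, the plan below is a strategy outline reflecting the state of the art rather than a complete argument. The first move is to invoke the Classification of Finite Simple Groups. Sporadic groups contribute only finitely many exceptions, which may be absorbed into the constant $c$. The remaining families to treat are the alternating groups $A_n$ and the finite simple groups of Lie type.

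For $A_n$ the plan is to follow the Helfgott--Seress strategy: analyse the action on $k$-subsets, use short words to manufacture permutations with many fixed points, and then iterate into a point-stabiliser, reducing $n$. Even after considerable effort, this route is only known to yield the quasipolynomial bound $\diam(A_n) \leq \exp\bigl(O((\log n)^4 \log\log n)\bigr)$, already nontrivial but weaker than what the conjecture asks for.

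For groups of Lie type the central target is a \emph{product theorem}: for every symmetric generating set $A$ of $G$, either $A^3 = G$ or $|A^3| \geq |A|^{1+\varepsilon}$. Such a doubling estimate converts, via the Ruzsa--Pl\"unnecke calculus, directly into the bound $\diam(G) \leq (\log|G|)^{O(1/\varepsilon)}$. The proof of the product theorem itself would rest on three pillars: escape from subvarieties (Larsen--Pink type bounds), which prevents $A$ from concentrating in a proper algebraic subgroup; Helfgott's pivot/centraliser technique, which exploits semisimple involutions to transfer growth from $G$ into a maximal torus; and sum-product estimates over the ground field.

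The main obstacle, and the reason Conjecture~\ref{conj:Babai} is still open, is that every known product theorem produces an $\varepsilon$ that shrinks with the Lie rank $r$, typically $\varepsilon \sim r^{-c}$. For bounded rank this strategy was completed by Helfgott, Pyber--Szab\'o, and Breuillard--Green--Tao, and gives polylogarithmic diameter. For families of unbounded rank such as $SL(n,p)$ with $n \to \infty$, no uniform product theorem is available, and progress so far has required extra hypotheses on the generating set---such as the presence of a transvection, as in the present paper---which give enough geometric leverage on the natural module to circumvent the rank dependence; removing any such hypothesis is precisely the point where a genuinely new idea would be needed.
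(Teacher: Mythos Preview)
Your proposal is appropriate in spirit: you correctly recognize that Conjecture~\ref{conj:Babai} is stated in the paper as an \emph{open conjecture}, not as a theorem, and that the paper makes no attempt to prove it in full. There is therefore no ``paper's own proof'' against which to compare. Your survey of the landscape---CFSG reduction, Helfgott--Seress for $A_n$, product theorems of Pyber--Szab\'o and Breuillard--Green--Tao for bounded rank, and the rank-dependence obstruction---accurately reflects both the state of the art and the introductory discussion in the paper itself.

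One clarification is worth making. You frame the paper's contribution as a workaround for the failure of uniform product theorems via ``geometric leverage on the natural module.'' That is fair, but it slightly understates the paper's actual programme: the paper explicitly adopts Pyber's proposed three-step decomposition (find an element of small support; reduce to minimal support; finish from a transvection) and addresses only the \emph{third} step for $G = SL(n,p)$. So the paper is not offering an alternative route to a product theorem but rather solving one piece of a modular strategy that, if completed, would bypass product theorems altogether for classical groups of unbounded rank. Your outline would be sharper if it positioned the paper within that decomposition rather than as an ad hoc hypothesis on the generating set.
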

\begin{rem}
  One can similarly define the Cayley digraph $\DiCay(G,X)$ with set
  of directed edges $\{(g,gx)\,|\,g\in G, x\in X\}$ and the directed
  diameters $\diam(\DiCay(G,X))$ and $\Didiam(G)$. (Note that if $X$
  is symmetric, i.e. if $X=X^{-1}$, then $\DiCay(G,X)$ can be
  identified with $\Cay(G,X)$ in a natural way.)  Clearly,
  $\diam(\Cay(G,X)) \leq \diam(\DiCay(G,X))$.  On the other hand, a
  result of Babai \cite[Theorem 1.4]{Babai06} states that
  $\diam(\DiCay(G,X)) \leq O(\diam(\Cay(G,X))^2(\log |G|)^3)$ also
  holds. As a consequence, a positive answer to Conjecture
  \ref{conj:Babai} implies the same result for $\Didiam(G)$ (with a
  possibly larger $c$). Therefore, in the following we only consider
  undirected Cayley graphs $\Cay(G,X)$ with symmetric generating set
  $X$.
\end{rem}
Babai's conjecture was proved by Helfgott \cite{Helfgott}
for the case $G=SL(2,p)$.  Later, this
conjecture was verified for finite simple groups of Lie type of
bounded rank independently by Pyber and Szab\'o \cite{Pyber-Szabo} and
Breuillard, Green and Tao \cite{Breuillard_etal} In view of the
classification theorem, it remains to prove Babai's conjecture for
alternating groups and for classical groups of unbounded rank. Despite
the serious efforts, Babai's conjecture is still unsolved for both of
these classes.  So far, the best known general upper bounds for
$\diam(G)$ are the following.

On the one hand, let $G$ be an alternating group of degree $n$.
Then a quasipolynomial bound
$\diam(G)\leq \exp(O(\log n)^4\log\log(n))$
has been proved by Helfgott and Seress
\cite{Helfgott-Seress}. Later, their argument has been greatly simplfied in 
\cite{Helfgott-unified}. 
On the other hand, let $G$ be a classical group of rank $n$ over the $q$-element
field.  Then $\diam G\leq q^{O(n(\log n)^2)}$ by the main result of
\cite{H-Ma-Py-Youming}.

In case of $G=A_n$, an upper
bound $\diam(\Cay(G,X))=O(n^C)$ has been proved by Babai, Beals and Seress
\cite{Babai-Beals-Seress} under the restriction
that $X$ contains an element of degree $<n/(3+\varepsilon)$. (The
degree of a permutation is the number of elements moved by it.)  Under
this assumption, the authors managed to show that there is a $3$-cycle
which can be written as a product of $O(n^c)$ many elements from
$X$.  Then an upper bound $\diam(\Cay(G,X))=O(n^C)$ follows
trivially from this, since there are only $O(n^3)$ many $3$-cycles in
$A_n$. 

This, and similar results motivated Pyber to suggest a split of
Babai's conjecture into three subproblems in the classical case.  If
$G$ is a classical group with natural $KG$-module $V$, and $g\in G$,
then the \emph{support} of $g$ can be defined as the codimension of
the eigenspace of $g$ corresponding to the eigenvalue $1$ of $g$.
(Intuitively, small support of $g$ means that $g$ is close to being
the identity map.) Given a finite classical group $G$ of rank $n$ over the
$q$-element field and a generator set $X$ for $G$, a proof for Babai's
conjecture might be found by solving each of the following
subproblems:
\begin{itemize}
\item Find an element $1\neq g\in G$ whose length over $X$ is polynomial in 
$n(\log q)$ and whose support is at most $cn$ for some $c<1$.
\item Starting with the assumption of the existence $g\in X$ with support 
  $<cn$, find an element $1\neq t\in G$ whose length over $X$ is polynomial in 
$n(\log q)$ and whose support is as small as possible.
\item Starting with the assumption of the existence $1\neq t\in X$
  whose support is minimal in $G$, finish the proof of Babai's conjecture. 
\end{itemize}
The goal of this paper is to manage the third subproblem from this
list for the case $G=SL(V)$. To achieve this goal we need to consider
transvections.

Let $V$ be an $n>2$-dimensional vector space over an arbitrary field
$K$ and $G:=SL(V)$.  A \emph{transvection} $t\in SL(V)$ is an element
of the form $t=1+\nu$ where $\nu\in \End(V)$ has the property that
$\im(\nu)$ is a one-dimensional subspace in $\ker(\nu)$. Thus,
$\nu^2=0$ and $\dim(\ker(\nu))=n-1$. (Throughout this paper, $1\in
SL(V)$ denotes the identity map on $V$.)

Note that an element $1\neq t\in SL(V)$ has smallest support in 
$SL(V)\setminus \{1\}$ if and only if $t$ is a transvection. 
The number of transvections in $SL(V)$ is roughly $q^{2n-1}$,
so (unlike to the alternating case) even the third subproblem does not follow
trivially. 
\begin{thm}\label{thm:main_q}
  Let $V$ be an $n$-dimensional vector space over the finite field
  $\FF p$ where $p$ is a prime and let $X\subset G=SL(V)$ be a
  generating set of $SL(V)$ which contains a
  transvection. Then $\diam(\Cay(G,X))=O((\log p)^c n^{14})$ for some
  absolute constant $c$.
\end{thm}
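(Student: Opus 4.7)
The plan is to reduce the theorem to showing that every transvection in $SL(V)$ has word length polynomial in $n$ and $\log p$ over $X$. Once this is done, the standard Bruhat or Gaussian decomposition writes any element of $SL(V)$ as a product of $O(n)$ transvections, and a polynomial bound on the length of each transvection immediately yields the claimed bound on $\diam(\Cay(G,X))$.

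First I would extract a short ``seed'' copy of $SL(2,p)$. Since $X$ generates $G$ and the centralizer of $t_0$ in $SL(V)$ is a proper subgroup, some $x\in X$ fails to commute with $t_0$, so $t_1:=xt_0x^{-1}$ is a transvection of length at most $3$ that is distinct from $t_0$. A direct calculation shows that two non-commuting transvections either generate $SL(2,p)$ on a common $2$-dimensional invariant subspace (and act trivially on a complement) or, in degenerate cases (shared center or shared axis), generate a small unipotent group; a short auxiliary conjugation handles the degenerate cases. Applying Helfgott's theorem to the resulting $SL(2,p)$ copy, every element of this subgroup has length $O((\log p)^c)$ over $X$.

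The core of the argument is an inductive expansion from a $k$-dimensional invariant subspace to a $(k+1)$-dimensional one. Suppose I have constructed, with word length at most $L_k$, a subgroup isomorphic to $SL(W_k)$ acting on a $k$-dimensional subspace $W_k\subset V$. Since $X$ generates $SL(V)$, some $y\in X$ fails to stabilize $W_k$; then the conjugate $y\,SL(W_k)\,y^{-1}=SL(yW_k)$ has all elements of length at most $L_k+2$. For generic $y$, the subspace $W_{k+1}:=W_k+yW_k$ has dimension $k+1$, and using commutators of transvections drawn from $SL(W_k)$ and $SL(yW_k)$ I would construct transvections whose centers and axes range over pairs in $W_{k+1}$. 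Reapplying Helfgott's theorem to the small-rank subgroups $SL(3,p)$ or $SL(4,p)$ arising in this construction lets me fill in all of $SL(W_{k+1})$ at cost $L_{k+1}=L_k+O((\log p)^c\, n^b)$ for an absolute exponent $b$. After at most $n-2$ iterations, this yields control of $SL(V)$ itself with total length $O((\log p)^c\, n^{b+1})$.

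The main obstacle I anticipate is controlling the length blow-up at each step. A naive approach with multiplicative bounds $L_{k+1}\leq cn^bL_k$ would give the superpolynomial $n^{O(n)}$, so the recursion must be organized additively: each dimension-increasing step must cost only a fixed polynomial in $n$ and $\log p$, independently of the current $k$. This likely requires reusing a small pool of conjugators across many directions simultaneously, rather than recomputing from scratch at each step. A second delicate point is genericity: when $W_k$ and $yW_k$ are in degenerate position, an auxiliary perturbation of $W_k$ using short elements of the already constructed $SL(W_k)$ is needed to restore transversality, and the length cost of these perturbations must also be uniformly bounded. The target exponent $14$ suggests this balancing produces $b\approx 13$.
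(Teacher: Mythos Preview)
Your outline has a genuine gap at precisely the point you flag: the additive recursion $L_{k+1}=L_k+O((\log p)^c n^b)$ is asserted but not established, and without it the scheme gives only $n^{O(n)}$. In your step, the new transvections in $W_{k+1}$ are built from commutators and conjugates of elements of $SL(W_k)$ and $SL(yW_k)$, each already of length $L_k$; applying a bounded-rank diameter bound to the subgroup they generate multiplies by a polylogarithmic factor rather than adding one. The vague suggestion to ``reuse a small pool of conjugators'' is exactly the missing idea, and nothing in the proposal explains how to implement it. A secondary gap is the seed $SL(2,p)$: the fact that $xt_0x^{-1}\neq t_0$ does not imply that $t_0$ and $xt_0x^{-1}$ fail to commute (they may share a center or an axis), and ``a short auxiliary conjugation handles the degenerate cases'' is not an argument; in general one needs conjugation depth growing with $n$ to guarantee a non-commuting pair. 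Finally, Helfgott's theorem as stated is only for $SL(2,p)$; invoking it for $SL(3,p)$ or $SL(4,p)$ really means invoking the bounded-rank results of Pyber--Szab\'o or Breuillard--Green--Tao.

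The paper's route is quite different and avoids the recursion altogether. It applies Helfgott exactly once: conjugating $t$ by words of length $\leq n$ produces a set $\mc_n$ whose transvection graph contains a directed cycle of length $\leq n$; collapsing this cycle yields two transvections connected by a two-way edge, hence generating a copy of $SL(2,p)$, so a full transvection group $t^K$ lies in $X^{O((\log p)^c n^2)}$. From that point on no diameter black box is used. Instead, the argument works entirely with the combinatorics of the directed \emph{transvection graph} on conjugates of $t^K$: one shows this graph is strongly connected, contains a non-singular chordless cycle, and hence (after $O(n)$ further conjugations) a one-way edge; a short chain of $K$-closed sets $Y_1\subset\cdots\subset Y_5$ then reaches every transvection with total length $O(n^{12})$. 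The factor $n^{14}$ is simply $n^2\cdot n^{12}$, not the output of a dimension-by-dimension induction.
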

\begin{rem}
  In this theorem, the constant $c$ is the same as in \cite[Main
  Theorem]{Helfgott}.  In fact, $O((\log p)^c)$ can be changed to a
  bound for the diameter of Cayley graphs of $SL(2,\FF p)$
  corresponding to generating sets $\{r,s\}$ where $r,s$
  is chosen to be two arbitrary non-commuting transvection in
  $SL(2,\FF p)$.
\end{rem}
For any transvection $t=1+\nu \in SL(V)$,
there is a unique \emph{transvection group} $t^K$ containing $t$,
which is defined as $t^K=\{t^\lambda\,|\,\lambda\in K\}$ where
$t^\lambda:=1+\lambda\nu$ for every $\lambda\in K$.  During the proof of Theorem
\ref{thm:main_q}, we also prove the following, which holds for any
field $K$.
\begin{thm}\label{thm:main_K}
  Let $V$ be an $n$-dimensional vector space over an arbitrary field $K$
  and let $X\subset G=SL(V)$ be a generating set of $SL(V)$ which
  contains a whole transvection group.
  Then $\diam(\Cay(G,X))=O(n^{12})$.
\end{thm}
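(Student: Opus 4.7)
The strategy is to exploit the fact that the entire transvection group $T = t^K \subset X$ is available at cost $1$, together with every conjugate $gTg^{-1}$ at cost $2\ell + 1$ whenever $g$ has $X$-length $\ell$. Via the Bruhat decomposition of $SL(V)$, every element of $SL(V)$ can be written as a product of $O(n^2)$ elements drawn from the $O(n^2)$ root subgroups $U_{ij}$ attached to a fixed basis of $V$; each $U_{ij}$ is itself a transvection group. Hence it suffices to produce each $U_{ij}$ at $X$-cost $O(n^{10})$, which will yield the claimed bound $\diam(\Cay(G, X)) = O(n^{12})$.

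The first concrete step is to produce, at constant $X$-cost, a second transvection group $T_1$ for which $\langle T, T_1 \rangle$ acts as $SL(2, K)$ on a $2$-dimensional subspace $W \subset V$. Since $X$ generates the non-abelian group $G$, some $x \in X$ fails to normalize $T$, so $T_1 := T^x$ is distinct from $T$ at cost $3$; after a bounded number of further short adjustments one obtains a \emph{linked} pair, meaning the image lines of $T$ and $T_1$ differ and each lies in the kernel hyperplane of the other. Then $\langle T, T_1 \rangle$ acts on the plane $W$ spanned by these image lines as $SL(2, K)$, and because both $T$ and $T_1$ are whole transvection groups available at cost $O(1)$, every element of this $SL(2, K)$ is accessible at cost $O(1)$ via the rank-one Bruhat decomposition --- with \emph{no} dependence on $|K|$. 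This is the point at which the hypothesis of a whole transvection group (as opposed to a single transvection) is essential.

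To propagate from $W$ to all of $V$, I would select a basis $e_1, \ldots, e_n$ with $e_1, e_2$ spanning $W$ and, inductively on $k$, produce short-cost elements $g_k \in \langle X\rangle$ whose images of $W$ sweep out the whole of $V$. Combining $T$ and $T_1$ with their conjugates by the $g_k$ yields a subgroup acting richly enough on each growing subspace $\langle e_1, \ldots, e_{k+1}\rangle$ to give every root subgroup $U_{ij}$ with $i, j \leq k+1$. Inside each such block, the commutator identity $[U_{ij}, U_{jk}] = U_{ik}$ together with short Weyl-element expressions (obtainable from $U_{ij}$ and $U_{ji}$ at constant cost) let one manufacture all non-simple root subgroups from the simple ones without the exponential blow-up that naive nested commutators would cause.

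The main obstacle will be bounding the $X$-lengths of the conjugating elements $g_k$ needed to move $W$ to new $2$-planes that are ``adjacent'' to the subspace built so far (sharing a line with it yet containing a new basis direction). A priori, such $g_k$ could have word length comparable to $\diam(\Cay(G, X))$ itself, so a separate geometric argument --- bounding the diameter of the orbit of $W$ on an appropriate Grassmannian or flag graph under short words of $X$ by a polynomial in $n$ --- is required. Once this is established, combining an $O(n^{10})$ per-root-subgroup cost with the $O(n^2)$ Bruhat factors yields the theorem.
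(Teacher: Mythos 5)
Your overall frame---reduce to producing the root subgroups $U_{ij}$ for a fixed basis and then finish by Gaussian elimination/Bruhat at cost $O(n^2)$---matches the paper's endgame (Lemma \ref{lem:standard_tv}), and your observation that a two-way linked pair of whole transvection groups yields an $SL(2,K)$ on a plane with boundedly many elementary factors is sound and is indeed where the ``whole transvection group'' hypothesis earns its keep. But two things are wrong or missing. First, the linked pair cannot be produced ``at constant $X$-cost after a bounded number of short adjustments'': the conjugate $T^x$ may commute with $T$ for every $x\in X$ (all image lines inside all kernel hyperplanes), and the paper needs $O(n^2)$ for this step---one conjugates up to $n$ times so that the image vectors span $V$ (a strictly increasing chain of subspaces, Theorem \ref{thm:basis_in_C_n}), which forces a directed cycle in the transvection graph, and then collapses that cycle of length up to $n$ into a $2$-cycle (Theorem \ref{thm:finding_a_tv_group}). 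This only costs you polynomial factors, so it is repairable.

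The serious gap is the step you yourself flag as ``a separate geometric argument'': bounding by a polynomial in $n$ the cost of moving the plane $W$ to adjacent planes sweeping out $V$. This is not a technical loose end; it is the entire content of the theorem, and the particular argument you propose is unlikely to exist in that form. The span-of-lines argument works because a strictly increasing chain of \emph{subspaces} of $V$ has length at most $n$; there is no analogous linear structure on the orbit of $W$ in the Grassmannian of $2$-planes, whose size is of order $|K|^{2(n-2)}$, so a naive growth argument gives only an exponential bound (and nothing at all for infinite $K$). Moreover, even granting cheap conjugators $g_k$, you would still have to control how the conjugated transvection groups interact (whether the relevant commutators degenerate), which is a genuine combinatorial obstruction. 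The paper avoids the Grassmannian entirely: it works with the directed transvection graph on the set of already-constructed transvections, proves strong connectivity (Lemma \ref{lem:strongly_connected}), locates a \emph{non-singular} chordless cycle via the potential function (Lemma \ref{lem:Y1-non-singular}) in order to manufacture a \emph{one-way} directed edge (Lemma \ref{lem:non-sing-to-one-way}), and then uses one-way edges plus iterated commutators to produce, from transvections with image vectors $v_1$ and $v_k$, a transvection with image vector $v_1+v_k$ at bounded cost (Lemma \ref{lem:bounded_graphs}); a dyadic induction then reaches an arbitrary image vector $v\in V$ at cost $O(n^6)$ (Lemma \ref{lem:Y5-everything-is-image}), and a final bounded-cost step produces an arbitrary transvection $1+v\otimes\phi$. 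None of this machinery, nor a workable substitute for it, appears in your proposal, so as it stands the proof is incomplete at its central point.
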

In \cite{Humphries}, Humphries gave an exact condition vhen a set of
$n$ many transvections generate $SL(n,p)$. (Note that $n$ is the
minimal possible size of a set of such generators for $SL(n,p)$.)
Although we did not use Humphries result directly (his proof produces
an algorithm which only provides exponential bound to the diameter of 
the corresponding Cayley graph), Humphries' condition was very useful 
to find a proof for Theorem \ref{thm:main_K}. In fact, our proof for 
Theorem \ref{thm:main_K} also provides a generalisation and extension of 
Humphries' theorem. For details, see Section  \ref{sec:Humphries}.
\section{Notation and some basic tools}\label{sec:basic}
The purpose of this section is to introduce some terminology 
and to explain some very basic ideas used in the rest of the paper. 
Through this section let $K$ be any field and let $V$ be a
an $n$-dimensional vector space over $K$.

If $t=1+\nu\in SL(V)$ is any transvection, then 
it can be parametrised by $(d,\phi)\in V\times V^*$, where
$V^*=\Hom(V,K)$ is the dual space of $V$ such that 
a transvection $t=t_{d,\phi}$ satisfies $t(x)=x+\phi(x)d$ for every $x\in V$.
Note that this parametrisation is just almost unique, namely, 
$t_{\lambda\cdot d,\phi}=t_{d,\lambda\cdot \phi}$ holds for every $\lambda\in K$. 
Therefore,
the set of transvections can be identified with the elements
\[
  \{1+d\otimes \phi\,|\,d\otimes \phi\in V\otimes_K V^*,
  0\neq d\in V,\,0\neq \phi\in V^*\textrm{ and }\phi(d)=0\}.
\]
By fixing a basis $e_1,\ldots,e_n$ of $V$, we use the notation
$e_1^*,\ldots,e_n^*$ for the dual basis of $V^*$ satisfying
$e_i^*(e_j)=\delta_{ij}$. With help of these basises, elements of $V$
and $V^*$ can be identified with the set of column vectors and row
vectors over $K$ (each of length $n$), respectively.
We use the notation $[d]$ and $[\phi]^T$ for the corresponding
column and row vectors, respectively.

Under this identification, $d\otimes \phi$ is identified with the
usual dyadic product $[d]\cdot [\phi]^T$ and $V\otimes V^*$ is
identified with $K^{n\times n}$, the vector space of $n\times n$
matrices over $K$. Furthermore, $E_{ij}:=e_i\otimes e_j^*$ becomes the
usual basis of $K^{n\times n}$. 
Now, 
$0=\phi(d)=[\phi]^T[d]= \Tr([d]\cdot [\varphi]^T)$, which means
that the subspace of $V\otimes V^*$ generated by the set
$\{\nu=d\otimes \phi\,|\,1+\nu \textrm{ is a transvection}\}$
corresponds to the subspace $\{M\in K^{n\times n}\,|\,\Tr(M)=0\}$, so
it has dimenion $n^2-1$.

Consider the usual action of $SL(V)$ on $V$ and its dual action on
$V^*$, that is, $g\cdot\varphi(x):=\varphi(g^{-1}\cdot x)$ for every
$g\in SL(V),\ \varphi\in V^*$ and $x\in V$.  An easy calculation shows
that if $t=1+d\otimes \varphi$ is a transvection and $g\in SL(V)$,
then 
\begin{equation}\tag{Eq.\ 1}\label{eq:conj_tv_with_g}
  gtg^{-1}=1+(g\cdot d)\otimes(g\cdot\varphi).
\end{equation}  
This equation and the following three ones 
will be frequently used in this paper. 
\begin{lem}\label{lem:basic_op_eq}\leavevmode
  Let $r_1=1+v_1\otimes \phi_1$ and $r_2=1+v_2\otimes \phi_2$ be two
  transvections.  Then we have
  \begin{enumerate}
  \item[(a)]
    $r_2 r_1 r_2^{-1}=1+(v_1+\phi_2(v_1)\cdot v_2)\otimes
    (\phi_1-\phi_1(v_2)\cdot \phi_2)$. In particular, if
    $\phi_1(v_2)=0$ then
    $r_2 r_1 r_2^{-1}=1+(v_1+\phi_2(v_1)\cdot v_2)\otimes \phi_1$.
  \item[(b)] If $\phi_1(v_2)=0$, then $[r_2,r_1]=r_2r_1r_2^{-1}r_1^{-1}=
    1+\phi_2(v_1) v_2\otimes \phi_1$.
  \item[(c)] If $\phi_1=\phi_2$, then $r_1r_2=1+(v_1+v_2)\otimes\phi_1$.
  \end{enumerate}
\end{lem}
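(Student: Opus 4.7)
The plan is to derive all three identities from two facts: the conjugation formula (Eq.\ \ref{eq:conj_tv_with_g}), and the dyadic composition rule $(v\otimes \phi)(w\otimes \psi) = \phi(w)\cdot v\otimes \psi$, together with the defining condition $\phi_i(v_i)=0$ of a transvection.

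For part (a), I would simply compute how $r_2$ acts on $v_1$ and (via the dual action) on $\phi_1$. The action on $V$ gives $r_2\cdot v_1 = v_1 + \phi_2(v_1)\,v_2$. For the dual action, $r_2^{-1} = 1 - v_2\otimes \phi_2$ since $\phi_2(v_2)=0$, so $(r_2\cdot \phi_1)(x) = \phi_1(r_2^{-1}x) = \phi_1(x) - \phi_2(x)\phi_1(v_2) = \bigl(\phi_1 - \phi_1(v_2)\phi_2\bigr)(x)$. Plugging these two expressions into (Eq.\ \ref{eq:conj_tv_with_g}) yields the stated formula immediately. The special case $\phi_1(v_2)=0$ is then read off directly.

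For part (b), I would start from the special case of (a): under $\phi_1(v_2)=0$ we have $r_2r_1r_2^{-1} = 1 + w\otimes \phi_1$ with $w := v_1 + \phi_2(v_1)\,v_2$. Then
\[
  [r_2,r_1] = (1+w\otimes \phi_1)(1-v_1\otimes \phi_1)
  = 1 + (w-v_1)\otimes \phi_1 - \phi_1(v_1)\,w\otimes \phi_1.
\]
Since $\phi_1(v_1)=0$, the last term vanishes and $w-v_1 = \phi_2(v_1)\,v_2$, giving the claim.

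For part (c), the assumption $\phi_1=\phi_2$ turns the product into
\[
  r_1r_2 = 1 + v_1\otimes \phi_1 + v_2\otimes \phi_1 + \phi_1(v_2)\,v_1\otimes \phi_1,
\]
and $\phi_1(v_2)=\phi_2(v_2)=0$ kills the cross term. There is no real obstacle here; the only thing to keep in mind throughout is that each $\phi_i(v_i)=0$ is being used to make squares of nilpotent dyads vanish. I would write out these three short calculations in the order (a), (b), (c), since (b) reuses (a).
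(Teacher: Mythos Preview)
Your computations are correct and constitute exactly the routine verification one expects here. The paper itself does not spell out a proof of this lemma at all---it is stated and then immediately used---so your approach via (Eq.~\ref{eq:conj_tv_with_g}) and the dyadic composition rule is precisely the intended one, just made explicit.
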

We next introduce the concept of \emph{transvection graphs}, 
which represents the relationship between any pairs of transvections. 
This tool will be crucial in our argument. 

For the remainder of this paper, let $\mt=\{1+d\otimes \phi\,|\,d\in
V,\,\phi\in V^*\}$ denote the set of all transvections. Occasionally, 
we allow ourselves to consider $1$ as an element of $\mathcal T$, 
and we think to $1$ as the trivial or the non-proper transvection.

Let $Y=\{t_1,\ldots,t_m\}\subset \mt$ be a set of transvections with
$t_i=1+\nu_i=1+d_i\otimes \phi_i$ for each $i$.  Then the directed
graph $\Gamma(Y)$ (called the transvection graph on $Y$) is defined as
follows.  Its vertex set is $Y\setminus\{1\}$ and there is a directed edge from
$t_i$ into $t_j$ if $\nu_j\nu_i\neq 0$, i.e. if $\phi_j(d_i)\neq 0$.

In particular, if $Y$ is the set of all transvections, we get the full
transvection graph $\Gr$.
Clearly, for every set $Y$ of transvections, $\Gamma(Y)$ is just the subgraph
of $\Gr$ induced by $Y\setminus\{1\}$.

For two transvections $r,s$, we say that $(r,s)$ is an edge if $(r,s)$
is a directed edge in $\Gr$.  The set of all edges (in $\Gr$) is
denoted by $\E$.  If $(r,s)$ is an edge, then we say that $(r,s)$ is
\emph{one-way directed} (resp. \emph{two-way directed}) if $(s,r)$ is
not an edge (resp. if $(s,r)$ is also an edge). Now, Lemma
\ref{lem:basic_op_eq} easily implies
\begin{lem}\label{lem:basic_op_graph}
  Let $x,y,z\in \mt$ be three proper transvections. 
  \begin{enumerate}
    \item[(a)] If $(x,y),(x,z)\notin \E$, then $(x,zyz^{-1})\notin \E$;
    \item[(b)] If $(x,y)\in \E$, but $(x,z)\notin \E$, then
      $(x,zyz^{-1})\in \E$.
    \item[(c)] If $(x,y),\,(y,z)\in \E$ but $(x,z)\notin\E$, then 
      $(x,yzy^{-1})\in \E$. 
    \item[(d)] If $(x,y),(x,z),(z,y)\in\E$, then
      $(x,z^\lambda yz^{-\lambda})\notin\E$ for a suitable $\lambda\in K$.
    \item[(e)] If $(x,y)$ is a one-way edge, then 
      $([x,y],z)\in \E\iff (x,z)\in \E$ and $(z,[x,y])\in \E\iff (z,y)\in \E$.
  \end{enumerate}
  In (a)-(d), dual statements can be obtained by reversing the
  direction of the edges.
\end{lem}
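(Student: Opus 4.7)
The plan is to parametrise the three transvections as $x = 1 + a\otimes\alpha$, $y = 1 + b\otimes\beta$, $z = 1 + c\otimes\gamma$ and to translate every edge (non)condition into a statement about the six pairing scalars $\alpha(b), \alpha(c), \beta(a), \beta(c), \gamma(a), \gamma(b)$.  Recall that $(r,s)\in\E$ is exactly the condition that the codirection of $s$, evaluated at the direction of $r$, is nonzero.  With this dictionary, each part reduces to a short linear-algebra check once Lemma \ref{lem:basic_op_eq} is used to write down the direction and codirection of the relevant conjugate or commutator.

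For (a)--(c), Lemma \ref{lem:basic_op_eq}(a) gives $zyz^{-1} = 1 + (b+\gamma(b)c)\otimes(\beta-\beta(c)\gamma)$, together with the symmetric expression for $yzy^{-1}$.  Whether $(x, zyz^{-1})\in\E$ is then controlled by the single scalar $(\beta - \beta(c)\gamma)(a) = \beta(a) - \beta(c)\gamma(a)$: the hypothesis of (a) forces it to be $0$, the hypothesis of (b) forces it to equal the nonzero number $\beta(a)$, and the analogous scalar for $yzy^{-1}$ under the hypothesis of (c) becomes the product $-\gamma(b)\beta(a)$ of two nonzero quantities.

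For (d), substituting $z^\lambda = 1 + (\lambda c)\otimes\gamma$ in place of $z$ in the same formula yields that $(x, z^\lambda y z^{-\lambda})\notin\E$ exactly when $\beta(a) = \lambda\beta(c)\gamma(a)$; the three hypothesised edges force each of $\beta(a), \beta(c), \gamma(a)$ to be nonzero, so $\lambda = \beta(a)/(\beta(c)\gamma(a))$ is a suitable value in $K$.  For (e), the one-way condition $\alpha(b) = 0$ is exactly what is needed to apply Lemma \ref{lem:basic_op_eq}(b), after which $[x,y]$ turns out (up to an inversion) to be a nonzero scalar multiple of the transvection $1 + b\otimes\alpha$; hence its direction and codirection are parallel to $b$ and $\alpha$ respectively, and each of the two equivalences asserted in (e) follows by reading off the appropriate pairing scalar.

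The dual statements in (a)--(d) are obtained by an identical computation after swapping the roles of $V$ and $V^*$ (equivalently, by transposition).  No single step is hard; the only thing that requires care is to track direction versus codirection separately when writing down $zyz^{-1}$, $yzy^{-1}$, $z^\lambda y z^{-\lambda}$ and $[x,y]$, so that each edge condition reduces cleanly to one equation in the pairing scalars.
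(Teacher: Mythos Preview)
Your approach is exactly the paper's: the paper gives no explicit proof, stating only that the lemma follows easily from Lemma~\ref{lem:basic_op_eq}, and your parametrisation $x=1+a\otimes\alpha$, $y=1+b\otimes\beta$, $z=1+c\otimes\gamma$ together with the conjugation and commutator formulas is precisely how one cashes that out. Your verifications of (a)--(d) are correct.

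In (e), however, you stop one line too soon. You correctly compute that under the one-way hypothesis ($\beta(a)\neq 0$ and $\alpha(b)=0$) the commutator $[x,y]$ lies in the transvection group of $1+b\otimes\alpha$, so its direction is parallel to $b$ and its codirection to $\alpha$. But if you then actually read off the pairing scalars as you propose, what you obtain is
\[
([x,y],z)\in\E \iff \gamma(b)\neq 0 \iff (y,z)\in\E,
\qquad
(z,[x,y])\in\E \iff \alpha(c)\neq 0 \iff (z,x)\in\E,
\]
which has $x$ and $y$ interchanged relative to the printed equivalences. (A quick sanity check: with $x=1+e_1\otimes e_3^*$, $y=1+e_2\otimes e_1^*$, $z=1+e_3\otimes e_1^*$ one has $[x,y]=1-e_2\otimes e_3^*$, and indeed $([x,y],z)\notin\E$ while $(x,z)\in\E$.) Your computation is right; the statement of (e) carries a misprint, and the corrected version, with $(y,z)$ and $(z,x)$ on the right-hand sides, is what the paper itself uses later, for instance in the proof of Lemma~\ref{lem:Y4-one-way-neighbours}. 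You should have carried the final check through and flagged this discrepancy rather than asserting that ``each of the two equivalences follows.''
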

In what follows, we will freely use Lemmas \ref{lem:basic_op_eq} and
\ref{lem:basic_op_graph} without referring to them.

For a set $X\subset SL(V)$, we define
$X^k:=\{x_1x_2\cdots x_k\,|\,x_1,\ldots,x_k\in X\}$ for every
$k\in \NN$.  In the following we assume that $1\in X$. This can
clearly be assumed without loss of generality and it implies that
$X^k\subset X^l$ whenever $k<l$, which results some simplification in
the notation.

For any two sets $X,Y\subset SL(V)$ we denote by $\ell_X(Y)$ the
length of $Y$ over $X$, i.e.  $\ell_X(Y):=\min\{k\in\NN\,|\,Y\subset
X^{k}\}$ is the smallest number $k$ such that every element of $Y$
can be written as a product of at most $k$ many elements from $X$. (If
there is no such $k$, then $\ell_X(Y):=\infty$ or undefined.)

Note that using this notation, the conclusion of Theorem \ref{thm:main_K} 
can be rewritten as $\ell_X(SL(V))=O(n^{12})$. Furthermore, 
$\ell$ has the property that 
\begin{equation}
\ell_X(Z)\leq \ell_X(Y)\cdot\ell_Y(Z)\tag{Eq.\ 2}\label{eq:length_submult}
\end{equation}
for any three sets $X,Y,Z\subset SL(V)$. 
This property can be used to split the proof of Theorem
\ref{thm:main_K} into several steps by providing a chain of sets of
transvections with stronger and stronger properties such that each one
has a sufficiently small length over the previous one. The main goal
of the proof of Theorem \ref{thm:main_K} is to construct all elements
of $\mt$ as a short product of elements from $X$. In order to achieve
our goal, we need to ensure that the above mentioned sets of transvections 
have a property, which we call $K$-closed.

We say that a subset $Y\sbs \mt$ is $K$-closed if $t^K\sbs Y$ holds
for every $t\in Y$. If $Y$ is any set of transvections, then its
$K$-closure is defined as $Y^K=\{s\in \mt\,|\,s\in t^K\textrm{ for
  some }t\in Y\}$. Clearly $Y\sbs \mt$ is $K$-closed $\iff Y=Y^K$.

During our argument, we always generate new transvections in a way as
in Equation \ref{eq:conj_tv_with_g} and in Lemma
\ref{lem:basic_op_eq}. More precisely, we start with $t^K\subset
X$. In a general step, we have an already constructed $K$-closed
$Y\subset\mt$ whose length is known to be short enough in $X$ and
we construct a new element $z\in\mt$ by one of the following ways:
\begin{itemize}
  \item $z=gyg^{-1}$ where $y\in Y$ and $g\in X^k\cup Y$ 
    (for some short enough $k$); Then $z^K=gy^Kg^{-1}$.
  \item $z=[x,y]$ where $x,y\in Y$; Then $z^K=[x^K,y]$.
  \item $z=xy$ where $x,y\in Y$ and $\ker(x)=\ker(y)$;
    Then $z^\lambda=x^\lambda y^\lambda$ for every $\lambda\in K$.
\end{itemize}
This implies that the upper
bound we give for $\ell_{X\cup Y}(z)$ is also an upper bound for
$\ell_{X\cup Y}(z^K)$. As a consequence, along with $z$ we can add the
whole $z^K$ to $Y$. In this way, we can guarantee that the set of
already constructed transvections remains $K$-closed through the whole proof.
\section{The reduction of Theorem \ref{thm:main_q} to Theorem \ref{thm:main_K}}
In this section let $V$ be an $n$-dimensional vector space over $\FF p$. 
If $X$ is a symmetric generating set of $SL(V)$ and $t\in X$ is a
transvection, then we can form many transvections by repeatedly
conjugating with the elements of $X$. In that way we get sets of
transvections
\[
\mc_k=\mc(t,X,k):=
\{x_k\cdots x_1 t x_1^{-1}\cdots x_k^{-1}\,|\,x_1,\ldots x_k\in X\}
\subset X^{2k+1}\]
and corresponding directed graphs $\Gamma(\mc_k)$ for every $k$. 

\begin{thm}\label{thm:basis_in_C_n}
  Let $X$ be a symmetric generating set of $SL(V)$, which contains a
  transvection $t$ and let $\mc_n=\mc(t,X,n)=\{r_1=t,\ldots,r_m\}
  \subset X^{2n+1}$, where $r_i=1+v_i\otimes \phi_i$ with for every
  $1\leq i\leq m$.  Then $\langle v_1,\ldots,v_m\rangle=V$ and
  $\Gamma(\mc_n)$ contains a directed cycle;
\end{thm}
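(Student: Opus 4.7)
The plan is to handle the two conclusions separately and essentially independently, then to observe that the second one follows almost immediately from the first together with the definition of an edge in $\Gamma(\mc_n)$.

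For the spanning statement, write $t=1+v\otimes\phi$, so that by Equation~\ref{eq:conj_tv_with_g} every element of $\mc_k$ has the shape $1+(x_k\cdots x_1\cdot v)\otimes(x_k\cdots x_1\cdot \phi)$, hence each $v_i$ arising from $\mc_k$ lies in the subspace
\[
W_k:=\langle x_k\cdots x_1\cdot v\,|\,x_1,\ldots,x_k\in X\rangle\sbs V.
\]
Since $1\in X$, we have the ascending chain $W_0\sbs W_1\sbs\cdots\sbs W_n$, and by construction $W_{k+1}$ equals the $K$-span of $X\cdot W_k$. Therefore $W_k=W_{k+1}$ forces $W_k$ to be $X$-invariant, hence $\langle X\rangle=SL(V)$-invariant, hence (by irreducibility of the natural $SL(V)$-module for $n>2$) equal to $V$. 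Since $\dim W_0=1$, either the chain strictly increases at every step until it stabilises at $V$, or some equality occurs and the chain is already $V$. Either way $W_{n-1}=V$, so $W_n=V$, which yields $\langle v_1,\dots,v_m\rangle=V$.

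For the directed cycle, argue by contradiction: if $\Gamma(\mc_n)$ has no directed cycle, then, being finite, it admits a vertex $r_i$ with no incoming edge. By definition of the transvection graph this means $\phi_i(v_j)=0$ for every $j\neq i$, and since the transvection condition forces $\phi_i(v_i)=0$ automatically, we in fact get $\phi_i(v_j)=0$ for all $j\in\{1,\dots,m\}$. But then $\phi_i$ annihilates the spanning set $\{v_1,\dots,v_m\}$, so $\phi_i=0$, contradicting $r_i\in\mt$.

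The only real content lies in the first claim, and the potential obstacle there is purely a bookkeeping issue: the decomposition $r_i=1+v_i\otimes\phi_i$ is only determined up to a scalar, but that ambiguity does not affect the subspace $\langle v_1,\dots,v_m\rangle$, so the argument is unaffected. Everything else is a short deduction from irreducibility of $V$ under $SL(V)$ and from the basic topological fact that a finite directed acyclic graph always has a source.
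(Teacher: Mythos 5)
Your proof is correct and follows essentially the same route as the paper: the spanning claim via the ascending chain of subspaces $W_k=\langle X^k(v)\rangle$ stabilising only at $V$ by irreducibility, and the cycle claim via the observation that a finite acyclic digraph has a vertex with no incoming edge, which would force some $\phi_i$ to vanish on a spanning set. The only cosmetic difference is that you phrase the second step as a contradiction while the paper states it directly.
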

\begin{proof}
  Since $SL(V)$ acts irreducibly on $V$ and $X$ generates $SL(V)$,
  $\langle X^i(v_1)\,|\,i\in \NN\rangle=V$. Therefore, if $\langle
  X^i(v_1)\rangle=\langle X^{i+1}(v_1)\rangle$ for some $i$, then
  $\langle X^i(v_1)\rangle=V$ must hold. Thus, $\langle
  X(v_1)\rangle<\langle X^2(v_1)\rangle<\ldots$ is a chain of
  subspaces, which is strictly increasing until it reaches $V$. Since
  $\dim V=n$, it follows that $X^n(v_1)$ generates $V$. Using the
  Equality \ref{eq:conj_tv_with_g}, we get that 
  $\langle v_1,\ldots,v_m\rangle=V$.

  It follows from $\langle v_1,\ldots,v_m\rangle=V$ and from the definition of  
  $\Gamma(\mc_n)$ that there is no source vertex in $\Gamma(\mc_n)$. 
  Recall that a directed graph is acyclic, if it does not have a directed
  cycle.  As any finite acyclic graph has a source vertex, we
  get that $\Gamma(\mc_n)$ has a directed cycle, which proves our claim.
\end{proof}
Now, we can reduce Theorem \ref{thm:main_q} to Theorem
\ref{thm:main_K}.
\begin{thm}\label{thm:finding_a_tv_group}
  Let $V$ be an $n$-dimensional vector space over $\FF p$ where $p$ is
  a prime and let $X\subset SL(V)$ be a symmetric generating set
  containing a transvection $t$. Then $X^l$ contains a full transvection group
  for $l=O((\log p)^c\cdot n^{2})$.
\end{thm}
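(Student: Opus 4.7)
The plan is to locate a short directed cycle in $\Gamma(\mc_n)$, shrink it by iterated conjugations down to a two-cycle, and then invoke Helfgott's theorem on $SL(2,\FF p)$ to extract a full transvection group. To start, I would refine Theorem~\ref{thm:basis_in_C_n} by choosing a directed cycle in $\Gamma(\mc_n)$ of minimum length $k$, say $r_1\to r_2\to\cdots\to r_k\to r_1$ with $r_i=1+v_i\otimes\phi_i$ (indices mod $k$). Minimality forbids chords, so for each $i$ we have $\phi_i(v_j)=0$ for every $j\in\{1,\ldots,k\}$ with $j\ne i-1$, while $\phi_i(v_{i-1})\ne 0$. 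Applying $\phi_i$ to a hypothetical relation $\sum_j a_jv_j=0$ collapses it to $a_{i-1}\phi_i(v_{i-1})=0$, so all $a_j$ vanish. Hence $v_1,\ldots,v_k$ are linearly independent and $k\le n$.

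Next I would shrink the cycle one vertex at a time, always modifying the same target vertex. At step $j$ I apply Lemma~\ref{lem:basic_op_graph}(c) to the triple $(r_{k-j},\,r_{k-j+1},\,r_1^{(j-1)})$ and set $r_1^{(j)}:=r_{k-j+1}\,r_1^{(j-1)}\,r_{k-j+1}^{-1}$, producing a cycle $r_1^{(j)}\to r_2\to\cdots\to r_{k-j}\to r_1^{(j)}$ of length $k-j$. The non-edge hypothesis $(r_{k-j-1},\,r_1^{(j)})\notin\E$ needed for the next step follows by induction from the original minimality pattern, once one observes that $\phi_1^{(j)}=\phi_1-\sum_{i=1}^{j}c_i\,\phi_{k-i+1}$ with $c_i\ne 0$ and that $v_1^{(j)}=v_1$ (each correction $\phi_{k-i+1}(v_1)v_{k-i+1}$ vanishes by minimality). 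After $k-2\le n-2$ steps this produces a two-cycle $r_1^{(k-2)}\leftrightarrow r_2$, and the recurrence $\ell_X(r_1^{(j)})\le 2(2n+1)+\ell_X(r_1^{(j-1)})$ yields $\ell_X(r_1^{(k-2)})\le(2k-3)(2n+1)=O(n^2)$, while $\ell_X(r_2)\le 2n+1$.

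Set $r:=r_1^{(k-2)}$, $s:=r_2$, and $W:=\langle v_1,v_2\rangle$, a two-dimensional subspace of $V$. The functionals $\phi_1^{(k-2)}|_W$ and $\phi_2|_W$ are linearly independent on $W$: the first kills $v_1$, as it is a linear combination of $\phi_1,\phi_3,\ldots,\phi_k$, each of which vanishes on $v_1$, whereas $\phi_2(v_1)\ne 0$. Therefore $U:=\ker\phi_1^{(k-2)}\cap\ker\phi_2$ has codimension exactly $2$ in $V$ and meets $W$ trivially. With respect to the splitting $V=W\oplus U$, both $r$ and $s$ fix $U$ pointwise, so $\langle r,s\rangle$ embeds into the subgroup of $SL(V)$ acting trivially on $U$, a copy of $SL(W)$; since $r|_W$ and $s|_W$ are two non-commuting transvections of $SL(W)\cong SL(2,\FF p)$, the embedding is surjective. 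Helfgott's theorem applied to $\Cay(SL(2,\FF p),\{r,s\})$ therefore places the full transvection group $r^{\FF p}$ inside $\{r,s\}^{O((\log p)^c)}\sbs X^{O((\log p)^c\cdot n^2)}$, which is what is required.

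The main obstacle is the length control during the shortening step: each conjugator $r_{k-j+1}$ must be drawn from the original $\mc_n$ (length $\le 2n+1$) so that every reduction only adds $O(n)$ to the length of the target, keeping $\ell_X(r_1^{(k-2)})$ at $O(n^2)$ rather than growing geometrically in $k$. A more delicate but cleaner point is ensuring that $\langle r,s\rangle\le SL(V)$ is genuinely isomorphic to $SL(2,\FF p)$, so that Helfgott's bound applies to the ambient group verbatim and not merely to a quotient; the decomposition $V=W\oplus U$ arranges exactly this.
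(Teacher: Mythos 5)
Your proof is correct and follows essentially the same route as the paper: take a minimal (hence chordless) directed cycle in $\Gamma(\mc_n)$, bound its length by $n$ via the linear independence of the $v_i$, collapse it to a two-way edge by iterated conjugation with total length $O(n^2)$ in $X$, and apply Helfgott's theorem to the copy of $SL(2,\FF p)$ generated by the resulting non-commuting pair. The only differences are cosmetic — you conjugate $r_1$ outward by $r_k,r_{k-1},\ldots$ where the paper conjugates $r_k$ inward by $r_{k-1},\ldots,r_2$ — and you supply the verification via the splitting $V=W\oplus U$ that $\langle r,s\rangle\simeq SL(2,\FF p)$, which the paper asserts without proof.
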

\begin{proof}
  By Theorem \ref{thm:basis_in_C_n}, $\Gamma(\mc_n)$ contains a
  directed cycle. Choosing a directed cycle of minimal length we get
  $r_1,\ldots,r_k\in \mc_n$ such that $(r_i,r_j)$ is a directed edge
  in $\Gamma_{\mc_n}$ if and only if $j-i\equiv 1\pmod k$. (In the
  following, when we consider a directed cycle of length $k$ we think
  of the indices as elements of $\ZZ_k$.)  Let
  $r_i=1+v_i\otimes \phi_i$ for every $1\leq i\leq k$, so
  $\phi_{j}(v_i)\neq 0\iff j-i\equiv 1\pmod k$.  Let
  $\alpha_1,\alpha_2\ldots,\alpha_k\in \FF p$.  Using the equality
  $\phi_{i+1}\Big(\sum_{s=1}^k \alpha_s v_s\Big)= \alpha_i\cdot
  \phi_{i+1}(v_i)$ for every $1\leq i\leq k$, we get that
  $v_1,\ldots,v_k\in V$ are linearly dependent, so $k\leq n$.

  Let
  $s_i:=r_ir_{i+1}\ldots r_{k-1}r_{k}r_{k-1}^{-1}\ldots
  r_{i+1}^{-1}r_i^{-1}$ for every $2\leq i\leq k-1$.  Using Lemma
  \ref{lem:basic_op_graph}/(a),(b),(c) or their duals repeatedly for
  $i=k-1,\ldots,2$, we get that $(r_{i-1},s_i)$ and $(s_i,r_1)$ are
  one way edges for $i>2$ and $(r_1,s_2)$ is a two-way edge. Clearly,
  $\ell_{\mc_n}(s_2)\leq 2k-3\leq 2n$, so $\ell_X(r_1,s_2)\leq
  2n^2$. Now, $\langle r_1,s_2\rangle\simeq SL(2,\FF p)$.  By
  \cite[Main Theorem]{Helfgott}, the diameter of $SL(2,\FF p)$ is
  $O((\log p)^c)$, so $X^{(O((\log p)^c n^2)}$ contains a full
  transvection group.
\end{proof}
\begin{cor}
  Theorem \ref{thm:main_K} implies Theorem \ref{thm:main_q}. 
\end{cor}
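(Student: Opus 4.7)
The plan is to combine Theorem \ref{thm:finding_a_tv_group} with Theorem \ref{thm:main_K} via the submultiplicativity of length given in Equation \ref{eq:length_submult}. Let $V$ and $X$ be as in Theorem \ref{thm:main_q}. Without loss of generality I may assume that $X$ is symmetric and contains $1$, since replacing $X$ by $X\cup X^{-1}\cup\{1\}$ changes neither the generation property nor the diameter of $\Cay(G,X)$.

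First I would apply Theorem \ref{thm:finding_a_tv_group} to produce an integer $l=O((\log p)^c\cdot n^{2})$ and a full transvection group $t^K\sbs X^l$. Now set $Y:=X^l$. Since $X\sbs Y$ and $X$ generates $G=SL(V)$, the set $Y$ also generates $G$; moreover, by construction $Y$ contains a full transvection group. Thus $Y$ satisfies the hypotheses of Theorem \ref{thm:main_K}, so that
\[
  \ell_Y(G)=\diam(\Cay(G,Y))=O(n^{12}).
\]

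Next I would invoke \eqref{eq:length_submult} with the triple $(X,Y,G)$. By definition of $Y$, every element of $Y$ is a product of at most $l$ elements of $X$, i.e.\ $\ell_X(Y)\leq l=O((\log p)^c n^{2})$. Combining this with the bound from Theorem \ref{thm:main_K} yields
\[
  \diam(\Cay(G,X))=\ell_X(G)\leq \ell_X(Y)\cdot \ell_Y(G)
  =O((\log p)^c n^{2})\cdot O(n^{12})=O((\log p)^c n^{14}),
\]
which is exactly the conclusion of Theorem \ref{thm:main_q}.

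There is essentially no obstacle at this point: the difficult work has been distributed between Theorem \ref{thm:finding_a_tv_group} (which uses Helfgott's bound for $SL(2,\FF p)$ to upgrade a single transvection to an entire transvection group in the generating set, at a multiplicative cost of $O((\log p)^c n^2)$) and Theorem \ref{thm:main_K} (the field-independent polynomial bound assuming a whole transvection group is present). The corollary merely chains these two statements together through \eqref{eq:length_submult}, so the only thing to check is that the constants and exponents multiply as advertised, which they do.
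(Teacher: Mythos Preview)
Your proof is correct and follows essentially the same route as the paper: set $Y=X^l$ with $l=O((\log p)^c n^2)$ from Theorem~\ref{thm:finding_a_tv_group}, apply Theorem~\ref{thm:main_K} to $Y$, and multiply the two bounds. The paper writes the final step as $\diam(\Cay(G,X))\leq l\cdot \diam(\Cay(G,X^l))$ rather than invoking \eqref{eq:length_submult} explicitly, but this is the same computation.
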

\begin{proof}
  Let $V$ be an $n$-dimensional vector space over $\FF p$ and let 
  $X\subset SL(V)$ be a symmetric generating set containing a
  transvection.  By Theorem \ref{thm:finding_a_tv_group}, $X':=X^k$
  contains a transvection group for $k=O((\log p)^c\cdot n^2)$.  Thus,
  Theorem \ref{thm:main_K} says that $\diam(\Cay(G,X'))=O(n^{12})$. Therefore,
  \[\diam(\Cay(G,X))\leq k\cdot \diam(\Cay(G,X')) =O((\log p)^c\cdot n^{14}).\]
\end{proof}
\section{Proof of Theorem \ref{thm:main_K}}\label{sec:main_K}
For the remainder, we assume that $V$ is an $n$-dimensional vector
space over an arbitrary field $K$ and $X$ is a symmetric generating
set of $SL(V)$ which contains a whole transvection group $t^K$.

First, we claim that it is enough to prove that the length of $\mt$ over $X$ is
bounded by a polynomial in $n$. In fact, we prove this for
a relatively small subset of transvections instead of $\mt$. 
\begin{lem}\label{lem:standard_tv}
  Let $e_1,\ldots, e_n$ be a basis of $V$ and
  let $Y=\{1+K\cdot e_i\otimes e_j^*\,|\,1\leq i,j\leq n,\; i\neq j\}$.
  Then we have $\diam(\Cay(G,Y))=O(n^2)$.
\end{lem}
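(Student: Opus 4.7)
The plan is to realise $\diam(\Cay(G,Y)) = O(n^2)$ by performing Gauss--Jordan elimination using elements of $Y$. Writing $g \in SL(V)$ as a matrix in the basis $e_1,\ldots,e_n$, left-multiplication by $1+\lambda E_{ij}\in Y$ (with $E_{ij}=e_i\otimes e_j^*$) effects the row operation ``add $\lambda$ times row $j$ to row $i$''. Since $Y$ is symmetric, because $(1+\lambda E_{ij})^{-1} = 1-\lambda E_{ij}\in Y$, it suffices to produce, for each $g$, a sequence $y_1,\ldots,y_k\in Y$ with $k=O(n^2)$ and $y_k\cdots y_1 g = 1$; then $g = y_1^{-1}\cdots y_k^{-1}\in Y^k$.

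I would split the reduction into three phases. In \emph{Phase 1 (forward elimination)} process columns $j=1,\ldots,n-1$ in turn. Invertibility of the current matrix guarantees some nonzero entry in the $j$-th column among rows $\geq j$; at most one row operation produces a nonzero pivot in position $(j,j)$, and $n-j$ further operations clear the column below. In \emph{Phase 2 (back-substitution)} clear the strict upper triangle of the resulting upper-triangular matrix, using at most $\binom{n}{2}$ further row operations, to obtain a diagonal matrix $D=\mathrm{diag}(a_1,\ldots,a_n)$ with $\prod_i a_i = 1$. Together Phases 1 and 2 cost at most $O(n^2)$ elements of $Y$.

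\emph{Phase 3 (diagonal reduction)} uses the standard Steinberg-type identity: setting
\[
  w_{ij}(a) := (1+aE_{ij})(1-a^{-1}E_{ji})(1+aE_{ij}) \in Y^3,
\]
a direct $2\times 2$ computation (fixing the complement of the plane $\langle e_i,e_j\rangle$) shows that $h_{ij}(a) := w_{ij}(a)\,w_{ij}(-1) \in Y^6$ is the diagonal matrix with $a$ in position $i$, $a^{-1}$ in position $j$, and $1$'s elsewhere. The telescoping product
\[
  D = h_{12}(a_1)\,h_{23}(a_1 a_2)\,h_{34}(a_1 a_2 a_3)\cdots h_{n-1,\,n}(a_1 a_2\cdots a_{n-1}),
\]
valid because $a_1\cdots a_n = 1$, then realises $D$ as a product of at most $6(n-1) = O(n)$ elements of $Y$. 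Summing over the three phases yields $\ell_Y(g) \leq O(n^2)$ uniformly in $g$, proving the lemma.

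I do not anticipate any substantial obstacle: the only subtle point is ensuring a nonzero pivot in Phase 1, which is handled by a single swap-in operation per column, and the Steinberg identity behind Phase 3 is a standard one-plane calculation.
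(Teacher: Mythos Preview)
Your argument is correct and follows essentially the same approach as the paper. The paper's proof is a two-sentence sketch: identify $Y$ with the elementary matrices $\{1+K\cdot E_{ij}\}$, note that every row operation is a product of $O(1)$ elements of $Y$, and invoke Gaussian elimination. Your Phases~1--3 simply make this explicit, and your use of the Steinberg element $h_{ij}(a)=w_{ij}(a)w_{ij}(-1)\in Y^6$ for the diagonal reduction is exactly the kind of ``constantly many elements of $Y$'' computation the paper is alluding to when it says each row operation costs $O(1)$.
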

\begin{proof}
  Identify $G=SL(V)$ with $SL(n,K)$ and $Y$ with the set
  $\{1+K\cdot E_{ij}\,|\,i\neq j\}$ as in Section \ref{sec:basic}. As
  it is well-known, every row operation on a matrix can be given by
  mupltiplying the matrix from the left with a product of constantly
  many elements from $Y$.
  The claim follows from the Gaussian elimination process.
\end{proof}
\begin{lem}\label{lem:strongly_connected}
  Let $Y_1$ be a $K$-closed set of transvections defined as
  \[
    Y_1=\mc(t^K,X,n^2)=\{x_{n^2}\cdots x_1t^\lambda x_1^{-1}\cdots x_{n^2}^{-1}
    \,|\,\lambda\in K,\,x_1,\ldots,x_{n^2}\in X\}\subset X^{2n^2+1}.
  \]
  Then
  \begin{enumerate}
  \item There are sets of transvections
    $\{s_1,s_2,\ldots,s_n\}\subset Y_1$ and
    $\{t_1,\ldots,t_n\}\subset Y_1$ with $s_i=1+a_i\otimes \alpha_i$ and
    $t_i=1+b_i\otimes \beta_i$ for each $i$
    such that $a_1,\ldots,a_n$ is a basis of
    $V$ and $\beta_1,\ldots,\beta_n$ is a basis of $V^*$.
  \item For every transvection $x\in \mt$, there are
    $1\leq i,j\leq n$ such that $(s_i,x)$ and $(x,t_j)$ are edges
    in $\Gamma(Y_1\cup x)$.
  \item $Y_1$ generates $SL(V)$.
  \item $\Gamma(Y)$ is strongly connected for any set of transvections
    $Y$ containing $Y_1$. 
  \end{enumerate}
\end{lem}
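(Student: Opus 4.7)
My plan is to prove (1) by the same irreducibility argument underlying Theorem~\ref{thm:basis_in_C_n}, deduce (2) immediately, obtain (4) by a combinatorial application of (2), and treat (3), which I regard as the main obstacle, via a subgroup-chain argument. For (1), I run the proof of Theorem~\ref{thm:basis_in_C_n} in parallel on $V$ and on $V^*$: since $SL(V)$ acts irreducibly on $V$ and $X$ generates $SL(V)$, the chain $\langle X\cdot d\rangle\subseteq\langle X^2\cdot d\rangle\subseteq\cdots$ (with $d$ the direction vector of $t$) strictly ascends until it equals $V$ and hence stabilizes within $n$ steps. By the conjugation formula~\eqref{eq:conj_tv_with_g}, the direction vectors of the transvections in $\mc(t^K,X,n)\subseteq Y_1$ span $V$, and I pick $n$ of them with linearly independent directions as $s_1,\ldots,s_n$. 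Because the dual $SL(V)$-action on $V^*$ is also irreducible, the same argument applied to the covector of $t$ gives $n$ transvections in $Y_1$ whose covectors form a basis of $V^*$; these serve as $t_1,\ldots,t_n$. Statement (2) is then immediate: the nonzero covector $\phi$ of $x=1+v\otimes\phi\in\mt$ cannot vanish on the entire basis $\{a_i\}$, giving an edge $(s_i,x)$ for some $i$, and dually $\beta_j(v)\neq 0$ for some $j$.

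For (4), given $r,r'\in Y\supseteq Y_1$ proper, (2) applied to $r$ and to $r'$ produces edges $(r,t_j)$ and $(s_{i'},r')$ in $\Gamma(Y)$, reducing the task to connecting some $t_j$ to some $s_{i'}$ by a directed path already inside $\Gamma(Y_1)$. For this I strengthen the selection in (1): since the covectors of the transvections in $\mc(t^K,X,n)$ themselves also span $V^*$ (by the same iterated-$X$ argument) and its direction vectors span $V$, a matroid-type exchange argument lets me pick the $s_i$'s so that $\{a_i\}$ is a basis of $V$ and $\{\alpha_i\}$ still spans $V^*$ (and dually the $b_j$'s span $V$). Once this is in place, for every $t_j$ with $b_j\neq 0$ some $\alpha_i(b_j)\neq 0$, giving the direct edge $(t_j,s_i)$; chasing these edges together with one more application of (2) to $s_{i'}$ supplies a directed path of length at most $4$ from $r$ to $r'$.

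The main obstacle is (3): proving $\langle Y_1\rangle=SL(V)$. My approach is to study the ascending chain of subgroups $H_k:=\langle\mc(t^K,X,k)\rangle\subseteq SL(V)$ for $k=0,\ldots,n^2$. Since conjugation by any element of $X$ carries $H_k$ into $H_{k+1}$, any stabilization $H_{k_0}=H_{k_0+1}$ forces $H_{k_0}$ to be $X$-invariant and therefore normal in $\langle X\rangle=SL(V)$; containing the non-central transvection $t$, and using simplicity of $PSL(V)$ together with perfectness of $SL(V)$ for $n\geq 3$, this yields $H_{k_0}=SL(V)$. The delicate technical point is a bound, uniform in the field $K$, on the length of strict inclusions in this chain: I would obtain it either via a dimension argument on the Zariski closures of the $H_k$ inside the algebraic group $SL(V)$ (length at most $n^2-1$), or by combining the spanning properties from (1) with a McLaughlin-style classification of irreducible transvection subgroups to exclude proper classical containments such as $Sp(V)$ or a unitary group.
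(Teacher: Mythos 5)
Your parts (1) and (2) reproduce the paper's argument (run the proof of Theorem~\ref{thm:basis_in_C_n} on $V$ and on $V^*$), and that much is fine. The problems are in (4) and (3).

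For (4), strong connectivity cannot be extracted from the spanning properties in (1)--(2), even in the strengthened form you propose. The set $\{1+e_1\otimes e_2^*,\ 1+e_2\otimes e_1^*,\ 1+e_3\otimes e_4^*,\ 1+e_4\otimes e_3^*\}$ in dimension $4$ has direction vectors forming a basis of $V$ \emph{and} covectors forming a basis of $V^*$ -- so it satisfies every hypothesis your edge-chasing uses -- yet its transvection graph is two disjoint $2$-cycles. (This is exactly why the paper lists (P\ 1.) and (P\ 2.) as independent conditions in Section~\ref{sec:Humphries}.) Separately, the ``matroid-type exchange'' is false: two matroids of the same rank on the same ground set need not have a common basis (take $v_1=v_2=e_1$, $v_3=e_2$, $v_4=e_3$ with $\alpha_1=e_2^*$, $\alpha_2=e_3^*$, $\alpha_3=\alpha_4=e_1^*$; any three independent directions force the two parallel covectors $\alpha_3,\alpha_4$). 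Moreover your final path-chase does not close up: you land on \emph{some} $s_i$, while (2) only gives an edge into $r'$ from one \emph{specific} $s_{i'}$. The paper instead deduces (4) from (3): a nonempty $Z\subsetneq Y$ with no outgoing edge would make $\langle v : 1+v\otimes\phi\in Z\rangle$ a proper $\langle Y\rangle$-invariant subspace, contradicting $\langle Y\rangle\supseteq\langle Y_1\rangle=SL(V)$. Any correct proof must invoke irreducibility of $\langle Y_1\rangle$, not just the spanning data.

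For (3), the stabilization-implies-normality step is sound, but neither proposed length bound works. The Zariski route fails because stabilization of the closures does not imply stabilization of the groups $H_k$ themselves: proper subgroups can be Zariski dense (e.g.\ $SL(n,\ZZ)\leq SL(n,\mathbb Q)$, or $SL(n,L)$ for a subfield $L$), so a dimension count on closures gives no bound on the chain $H_0<H_1<\cdots$. And the raw subgroup chain in $SL(n,q)$ has length up to roughly $n^2\log_2 q$, exceeding the conjugation depth $n^2$ fixed in the statement for every $q>2$; the paper uses the subgroup chain only for $K=\FF 2$. The missing quantitative ingredient is the paper's chain of \emph{subspaces} $W_i=\langle \nu : 1+\nu\in\mc(t^K,X,i)\rangle$ inside the $(n^2-1)$-dimensional trace-zero subspace of $V\otimes V^*$: this strictly increases until it fills up, hence within $n^2-1$ steps over any field, after which $H=\langle\mc(t^K,X,n^2-1)\rangle$ is irreducible and generated by transvection groups, McLaughlin's theorem gives $H\geq Sp(V)$, and one further conjugation by $X$ (using that $Sp(V)$ is not normal in $SL(V)$) yields $\langle Y_1\rangle=SL(V)$. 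Your second option gestures at this but does not supply the subspace chain, which is what makes the bound uniform in $K$.
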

\begin{proof}
  (1) and (2) are clearly follows from the proof of Theorem
  \ref{thm:basis_in_C_n}. (Here we just reformulated them for the
  convenience of the reader.) 

  In order to prove that $Y_1$ generates $SL(V)$, 
  we consider the case $K\neq \FF 2$ first.  Let
  $W=\langle v\otimes \phi\in V\otimes V^*\,|\, \phi(d)=0\rangle$, so
  $\dim(W)=n^2-1$.  For every $i$, let
  $W_i=\langle \nu\in V\otimes V^*\,|\,
  1+\nu\in\mc(t^K,X,i)\rangle\leq V\otimes V^*$.  Since $X$ generates
  $SL(V)$ and the transvection subgroups are all conjugate to each
  other, we get that $\cup_{i=1}^\infty W_i=W$.  As in the proof of
  Theorem \ref{thm:basis_in_C_n}, we get that $W_1<W_2<\ldots $ is a
  strictly increasing chain of subspaces until it reaches $W$. This
  means that $W_{n^2-1}=W$. Identifying $V\otimes V^*$ with $\End(V)$,
  one can see that there is no proper $W_{n^2-1}=W$-invariant subspace
  of $V$, which implies the same for the subgroup
  $H=\langle \mc(t^K,X,n^2-1)\rangle\leq SL(V)$. In other words, 
  $H$ is an irreducible subgroup of $SL(V)$, which is generated by 
  transvection groups. As a special case of the main result of 
  \cite{McLaughlin}, it follows that $H\geq Sp(V)$. 
  Since $X$ generates $SL(V)$ and $Sp(V)$ is not normal in $SL(V)$ (unless 
  $Sp(V)=SL(V)$),  
  we have $\langle x Sp(V)x^{-1}\,|\,x\in X\rangle=SL(V)$. 
  Thus, $\langle Y_1\rangle \geq \langle xHx^{-1}\,|\,x\in X\rangle 
  \geq SL(V)$ as claimed.
  
  The case when $K=\FF 2$ is similar, but we take the strictly
  increasing chain of subgroups $H_0=t^K< H_1< H_2\leq \ldots$ where
  $H_i=\langle \mc(t^K,X,i)\rangle$ for each $i$.  Now, the length of this
  chain can be bounded above by $\log_2(|SL(V)|)\leq n^2$.

  Now, let us assume that $\Gamma(Y)$ is not strongly connected for
  some set of transvections $Y\supset Y_1$.  Then there is a
  $\emptyset\neq Z\subsetneq Y$ such that there is no outgoing edge
  from $Z$ in $\Gamma(Y)$. This means that the subspace
  $V_Z=\langle v\,|\,\exists \phi\in V^*:\ 1+v\otimes \phi\in
  Z\rangle$ is a proper subspace of $V$ fixed by each element of
  $Y$. This contradicts with the irreducibility of
  $SL(V)=\langle Y\rangle$ on $V$.
\end{proof}
\begin{rem}
  For $k=2$, the statement analogous to the result of
  \cite{McLaughlin} does not hold. For this case, there are several
  other types of irreducible subgroups of $SL(V)$ which are generated
  by transvection groups exists. For a complete list, see
  \cite{McLaughlin_F2}.
\end{rem}
Our next goal is to produce a $K$-closed set of transvections $Y_2\supset
Y_1$ such that $Y_2$ contains a one-way directed edge and
$\ell_{Y_1}(Y_2)\leq O(n)$.  Of course, if $Y_1$ itself contains
a one-way directed edge, then we can choose $Y_2=Y_1$. Therefore, we assume
that every edge is two-way directed in $Y_1$. 

To achieve our goal, we consider cycles in $\Gamma(Y_1)$. 
Let $(r_1,r_2,\ldots,r_k)$ be a (two-way directed) cycle in $\Gamma(Y_1)$
with $k\geq 3$ and $r_i=1+v_i\otimes \phi_i$ for each $i$. 

We say that this cycle is \emph{non-singular} if 
\begin{equation}\tag{Eq.~3}\label{eq:cyclic-det}
  \cdet(v_1,\phi_1,\ldots,v_k,\phi_k):=
  \prod_{i=1}^k\phi_i(v_{i+1})+(-1)^{k-1}\cdot\prod_{i=1}^k\phi_{i+1}(v_i)\neq 0.
\end{equation}
\begin{rem}\label{rem:cyclic-det}\leavevmode
  \begin{enumerate}
  \item Note that the non-singularity of a cycle $(r_1,r_2,\ldots,r_k)$
    only depends on the transvection groups $r_1^K,\ldots,r_k^K$.
    Indeed, by changing $(1+v_i \otimes \phi_i)$ to
    $(1+v_i\otimes\phi)^\lambda=1+(\lambda v_i)\otimes
    \phi_i=1+v_i\otimes(\lambda\phi_i)$ for some
    $\lambda\in K^\times$, the value of
    $\cdet(v_1,\phi_1,\ldots,v_k,\phi_k)$ is multiplied by $\lambda$.
  \item The formula in (\ref{eq:cyclic-det}) can also be defined for any 
    set of transvections $r_1,\ldots,r_k$. Clearly, its value is zero 
    unless at least one of $(r_1,\ldots,r_k)$ and $(r_k,\ldots,r_1)$ is 
    a directed cycle in $\Gamma(\mt)$, while it 
    is non-zero if exactly one of $(r_1,\ldots,r_k)$ and $(r_k,\ldots,r_1)$ is 
    a directed cycle in $\Gamma(\mt)$.
  \item In the particular case when $k$ is odd and
    $(r_1,\ldots,r_k)$ is a chordless (or induced) cycle,
    $\cdet(v_1,\phi_1,\ldots,v_k,\phi_k)$ has a special meaning: It is
    just the determinant of the $k\times k$-matrix $(\phi_i(v_j))$.
  \end{enumerate}
\end{rem}
\begin{lem}\label{lem:non-sing-to-one-way}
  Let $C=(r_1,\ldots,r_k)$ be a chordless non-singular cycle in
  $\Gamma(Y_1)$ (with $k\geq 3$). Then $\Gamma(C^{2n+1})$ contains a
  one-way directed edge.
\end{lem}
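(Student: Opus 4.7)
The plan is an induction on $k$: repeatedly reduce the cycle length by one, preserving chordlessness, non-singularity, and the property that every edge is two-way, down to a triangle which can be handled directly by a one-parameter conjugation.

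For the base case $k=3$, consider the candidate $r' := r_2^\lambda r_3 r_2^{-\lambda}$. Lemma~\ref{lem:basic_op_eq}(a) gives $v_{r'} = v_3 + \lambda\phi_2(v_3)v_2$ and $\phi_{r'} = \phi_3 - \lambda\phi_3(v_2)\phi_2$. Setting $\phi_{r'}(v_1)=0$ and $\phi_1(v_{r'})=0$ each pins $\lambda$ down to a specific value $\mu_1$, respectively $\mu_2$, and a direct computation shows $\mu_1 = \mu_2$ precisely when $\cdet(v_1,\phi_1,v_2,\phi_2,v_3,\phi_3) = 0$. Hence non-singularity forces $\mu_1 \neq \mu_2$, and choosing $\lambda = \mu_1$ makes $(r',r_1)$ a one-way directed edge, lying in $C^3 \subset C^{2n+1}$.

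For $k \geq 4$, the reduction step replaces $r_k$ by $r_k' := r_{k-1}r_kr_{k-1}^{-1}$ and considers the sequence $(r_1,\ldots,r_{k-2},r_k')$. Using chordlessness (which forces $\phi_1(v_{k-1})=\phi_{k-1}(v_1)=0$ and $\phi_{k-2}(v_k)=\phi_k(v_{k-2})=0$), the four relevant evaluations simplify to $\phi'(v_1)=\phi_k(v_1)$, $\phi_1(v')=\phi_1(v_k)$, $\phi_{k-2}(v')=\phi_{k-1}(v_k)\phi_{k-2}(v_{k-1})$ and $\phi'(v_{k-2})=-\phi_k(v_{k-1})\phi_{k-1}(v_{k-2})$. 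From this one checks that the reduced sequence is again a chordless cycle, all its cycle edges remain two-way, and its $\cdet$-expression collapses term-by-term---with the extra $(-1)$ from the fourth identity combining with $(-1)^{k-2}$ to reproduce the original sign $(-1)^{k-1}$---back to the original $\cdet$; so non-singularity is preserved. Iterating this step $k-3$ times yields a chordless non-singular triangle $(r_1,r_2,r_k^{(k-3)})$ with $r_k^{(k-3)} = r_3 r_4 \cdots r_{k-1}r_k r_{k-1}^{-1}\cdots r_3^{-1}$ of length $2k-5$ in $C$.

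Applying the $k=3$ construction to this reduced triangle gives $s := r_2^\lambda r_k^{(k-3)} r_2^{-\lambda}$ of length $2k-3$, with $(s,r_1)$ a one-way directed edge. Using the bound $k \leq n$ (which holds for a chordless cycle in $V$; for odd $k$ this is immediate from Remark~\ref{rem:cyclic-det}(3), and for even $k$ an analysis of the recurrence $c_{j+1} = -(\phi_j(v_{j-1})/\phi_j(v_{j+1}))\,c_{j-1}$ arising from a relation $\sum c_i v_i=0$ together with non-singularity suffices), we get $2k-3 \leq 2n-3 < 2n+1$, so both $s$ and $r_1$ lie in $C^{2n+1}$. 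The main technical obstacle is the $\cdet$-preservation identity: the reduced $\cdet$ is a product involving only $k-1$ factors, yet must equal the cyclic product over $k$ factors, so the missing factors $\phi_{k-1}(v_k)\phi_{k-2}(v_{k-1})\phi_k(v_1)$ and $\phi_{k-1}(v_{k-2})\phi_k(v_{k-1})\phi_1(v_k)$ have to materialise---with correct signs---out of $\phi_{k-2}(v')$, $\phi'(v_1)$, $\phi'(v_{k-2})$ and $\phi_1(v')$.
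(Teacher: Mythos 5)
Your proof is correct and follows essentially the same route as the paper's: the same $k=3$ base case (you conjugate $r_3$ and produce the one-way edge $(r',r_1)$ where the paper conjugates $r_1$ and produces $(r_1',r_3)$ --- a symmetric variant), and the same reduction step $r_k\mapsto r_{k-1}r_kr_{k-1}^{-1}$ with the same four evaluation identities and the same sign bookkeeping showing $\cdet$ is preserved exactly. The only loose end is your claimed bound $k\le n$, whose justification for even $k$ is sketchy; but the weaker bound $k\le n+2$ (from linear independence of $v_1,\ldots,v_{k-2}$, as in the paper) already gives $2k-3\le 2n+1$, which is all the lemma needs.
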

\begin{proof}
  Let $r_i=1+v_i\otimes \phi_i$ for each $1\leq i\leq k$. By our assumption, 
  $\phi_{i+1}(v_i)\neq 0$ and $\phi_i(v_{i+1})\neq 0$ for each $1\leq i\leq k$,
  while $\phi_i(v_j)=0$ if $i-j\not\equiv \pm 1 \pmod k$. 
  
  First, let us assume that $k=3$.
  We calculate the conjugate 
  \[
  r_1'(\lambda):=
  r_2^\lambda r_1 r_2^{-\lambda}=1+(v_1+\lambda\phi_2(v_1) v_2)\otimes
  (\phi_1-\lambda\phi_1(v_2)\phi_2).
  \]
  Now, $(r_1',r_3)$ is a one-way directed edge in
  $\Gamma(C\cup\{r_1'\})$ if and only if both of the following
  inequality and equality hold:
  \begin{align*}
    \phi_3(v_1+\lambda\phi_2(v_1) v_2)=
    \phi_3(v_1)+\lambda\phi_2(v_1)\phi_3(v_2)\neq 0,\\
    (\phi_1-\lambda\phi_1(v_2)\phi_2)(v_3)=
    \phi_1(v_3)-\lambda\phi_1(v_2)\phi_2(v_3)=0.
  \end{align*}
  Since each $\phi_i(v_j)\neq 0$, there is such a $\lambda\in
  K^\times$ if and only if
  \[
  \begin{vmatrix}
    \phi_3(v_1)&\phantom{-}\phi_2(v_1)\phi_3(v_2)\\
    \phi_1(v_3)&-\phi_1(v_2)\phi_2(v_3)
  \end{vmatrix}=-\cdet(v_1,\phi_1,v_2,\phi_2,v_3,\phi_3)\neq 0,
  \]
  which exactly means that the cycle $(r_1,r_2,r_3)$ is
  non-singular. Thus, there is a one-way directed edge in
  $\Gamma(C^3)$.

  Now, we turn to the general case, and we use induction on $k$. Using
  the same argument as in the proof of Theorem
  \ref{thm:finding_a_tv_group}, we get that $v_1,\ldots,v_{k-2}\in V$ is
  linearly independent, so $k\leq n+2$ holds.  Let $r_{k-1}'$ be the
  conjugate of $r_k$ by $r_{k-1}$, so
  \[
  r_{k-1}'=r_{k-1}\cdot r_k\cdot r_{k-1}^{-1}
  =1+(v_k+\phi_{k-1}(v_k)v_{k-1})\otimes(\phi_k-\phi_k(v_{k-1})\phi_{k-1}),
  \]
  that is, $r_{k-1}'=1+(v_{k-1}')\otimes (\phi_{k-1}')$ with
  $v_{k-1}'=v_k+\phi_{k-1}(v_k)v_{k-1}$ and
  $\phi_{k-1}'=\phi_k-\phi_k(v_{k-1})\phi_{k-1}$ Thus, for every
  $1\leq i\leq k-2$, we have $(r_{k-1}',r_i)$ is an edge in
  $\Gamma(C\cup \{r_{k-1}'\})$ if and only if
  \[
  \phi_i(v_{k-1}')=\phi_i(v_k)+\phi_{k-1}(v_k)\phi_i(v_{k-1})\neq 0
  \iff i\in\{1,k-2\}.\]
  Similarly, $(r_i,r_{k-1}')$ is an edge if and only if $i\in\{1,k-2\}$. Thus, 
  $(r_1,\ldots,r_{k-2},r_{k-1}')$ is a chordless cycle of length $k-1$. 
  Furthermore, 
  \begin{align*}
    &\phi_1(v_{k-1}')=\phi_1(v_k),&&\phi_{k-2}(v_{k-1}')=
    \phantom{-}\phi_{k-1}(v_k)\phi_{k-2}(v_{k-1}),\\
    &\phi_{k-1}'(v_1)=\phi_k(v_1),&&\phi_{k-1}'(v_{k-2})=
    -\phi_k(v_{k-1})\phi_{k-1}(v_{k-2}),\\
  \end{align*}
  which implies that
  \[
  \cdet(v_1,\phi_1,\ldots,v_{k-2},\phi_{k-2},v_{k-1}',\phi_{k-1}')
  =\cdet(v_1,\phi_1,\ldots,v_{k-1},\phi_{k-1},v_{k},\phi_{k})\neq 0.
  \]
  Using this process repeatedly, we find shorter and shorter
  non-singular chordless cycles $(r_1,\ldots,r_{i-1},r_i')$ where 
  $r_i'=r_ir_{i+1}'r_i^{-1}$ for $i=k-1,k-2,\ldots,3$. In that way we can find a 
  non-singular two-way directed cycle 
  $(r_1,r_2,r_3')$, where 
  \[r_3'=r_3r_4\cdots r_{k-1}r_kr_{k-1}^{-1}\cdots r_{3}^{-1}\in
  C^{2n-1}.\] By the first part of the proof, $\Gamma(C^{2n+1})$
  contains a one-way directed edge.
\end{proof}
\begin{lem}\label{lem:Y1-non-singular}
  $\Gamma(Y_1)$ contains a non-singular chordless cycle. 
\end{lem}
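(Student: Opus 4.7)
The plan is to prove the lemma in two stages: first establish the existence of some chordless cycle of length at least $3$ in $\Gamma(Y_1)$, and then show that a non-singular one can always be produced, modifying it if necessary using the operations of Lemma~\ref{lem:basic_op_eq} and the rich structure of $Y_1$ provided by Lemma~\ref{lem:strongly_connected}.

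For the first stage, since we are in the case where every edge of $\Gamma(Y_1)$ is two-way directed, the underlying undirected graph $U$ of $\Gamma(Y_1)$ is connected by Lemma~\ref{lem:strongly_connected}(4). I would argue that $U$ is not a tree: by Lemma~\ref{lem:strongly_connected}(1), $Y_1$ contains $\{s_1=1+a_1\otimes \alpha_1, \ldots, s_n=1+a_n\otimes \alpha_n\}$ with $a_1,\ldots,a_n$ a basis of $V$, together with the dual set $\{t_j\}$. If $U$ were a tree, a leaf analysis shows that for a leaf $r=1+v\otimes \phi$ (with unique neighbor $r'$), every other $s=1+w\otimes \psi\in Y_1$ satisfies both $\phi(w)=0$ and $\psi(v)=0$. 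Combined with the basis-forming property, this forces $\langle Y_1\setminus\{r'\}\rangle$ to stabilize the nonzero vector $v$, and the required generation of $SL(V)$ by $Y_1$ cannot then be achieved without contradicting the irreducibility of $SL(V)$ on $V$. Hence $U$ contains an undirected cycle of length $\geq 3$, and choosing the shortest such cycle yields a chordless directed cycle $C=(r_1,\ldots,r_k)$ in $\Gamma(Y_1)$ with $k\geq 3$.

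For the second stage, write $r_i=1+v_i\otimes \phi_i$. If already $\cdet(v_1,\phi_1,\ldots,v_k,\phi_k)\neq 0$, we are finished. Otherwise, I would replace $r_1$ by $r_1'=s^\lambda r_1 s^{-\lambda}$ for a suitably chosen $s=1+a\otimes \alpha\in Y_1$ and parameter $\lambda\in K$. Lemma~\ref{lem:basic_op_eq}(a) gives
\[
  r_1' = 1+\bigl(v_1+\lambda\alpha(v_1)\,a\bigr)\otimes\bigl(\phi_1-\lambda\phi_1(a)\,\alpha\bigr).
\]
Using Lemma~\ref{lem:strongly_connected}(2), such an $s$ can be selected to have an edge to $r_1$ while avoiding edges to $r_3,\ldots,r_{k-1}$ (the required flexibility is supplied by the basis-forming set of Lemma~\ref{lem:strongly_connected}(1) combined with the strong connectedness). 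For such $s$, the modified tuple $(r_1',r_2,\ldots,r_k)$ remains a chordless cycle for all but finitely many $\lambda$, and the value of $\cdet$ becomes a polynomial in $\lambda$ of degree at most $2$. By choosing $s$ appropriately, this polynomial is nontrivial, and hence evaluates to a nonzero element of $K$ for some $\lambda$.

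The main obstacle is verifying in the second stage that the polynomial in $\lambda$ produced by the modification is genuinely nontrivial, and handling the case of small fields such as $K=\FF 2$ where the available values of $\lambda$ are extremely limited. In such degenerate situations one would need to modify several vertices of $C$ simultaneously, or use Lemma~\ref{lem:basic_op_eq}(b), (c) instead of pure conjugation, to produce the required non-singular chordless cycle; a uniform argument will require delicate bookkeeping of the effect of such modifications on $\cdet$.
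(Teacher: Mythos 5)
There is a genuine gap, and it sits exactly where you flagged ``the main obstacle'': stage two does not work, and in fact cannot work as designed. Your plan is to take a chordless cycle $C=(r_1,\ldots,r_k)$ that happens to be singular and repair it by replacing $r_1$ with $s^\lambda r_1 s^{-\lambda}$ for some $s\in Y_1$. But the critical case to rule out is precisely the one where \emph{every} cycle in $\Gamma(Y_1)$ is singular, and in that case this repair provably fails: the paper's proof shows (via the identity $\cdet(w_3,\psi_3,w_1,\psi_1,u,\mu)=-\psi_1(w_2)\psi_2(w_1)\cdot\cdet(w_1,\psi_1,w_2,\psi_2,u,\mu)$ for $s_3=s_2s_1s_2^{-1}$, together with the multiplicativity of the potential $\Pot$ under gluing cycles along a path) that if all cycles of a strongly connected set $Y$ of transvections are singular, then all cycles of $Y\cup\{yxy^{-1}\}$ with $x,y\in Y$ are still singular. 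Since $Y_1$ is $K$-closed, $s^\lambda\in Y_1$, so your modified tuple $(r_1',r_2,\ldots,r_k)$ is obtained by exactly such an operation; hence the polynomial in $\lambda$ you hope to make nontrivial vanishes at every admissible $\lambda$ (and identically, over an infinite field) in the only scenario that needs to be excluded. Singularity is a conjugation-invariant obstruction, so no amount of local modification by elements of $Y_1$, nor bookkeeping over small fields, can remove it. The missing idea is global: one must show that ``all cycles in $\Gamma(Y_1)$ are singular'' is incompatible with $\langle Y_1\rangle=SL(V)$. The paper does this by proving the invariance statement above, noting that the conjugation closure of $Y_1$ eventually contains every transvection, and observing that the full transvection graph $\Gamma(\mt)$ contains a non-singular cycle when $n\geq 3$ --- a contradiction.

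Two smaller points. First, your order of operations (chordless first, then non-singular) is what forces you into the dead end; the paper reverses it, first producing some non-singular cycle by the contradiction argument and then extracting a chordless one by taking a non-singular cycle of minimal length and using the gluing formula on a putative chord. Second, your stage-one claim that the underlying graph is not a tree is only sketched: knowing that $\langle Y_1\setminus\{r'\}\rangle$ fixes $v$ does not by itself contradict anything, since $r'$ has been removed; this would need an actual argument (and in the paper it is never needed in this form).
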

\begin{proof}
  First, we reformulate the concept of non-singularity given by
  (\ref{eq:cyclic-det}) by introducing the \emph{potential} for
  two-way directed cycles. First, for any two transvections, 
  $1+c\otimes \gamma,\ 1+d\otimes \delta$ connected by a two-way directed edge
  let
  \[
  r(c,\gamma,d,\delta):=\frac{\delta(c)}{\gamma(d)}
  \]
  Now, let $(r_1,\ldots,r_k)$ be a two-way directed cycle where 
  $r_i=1+v_i\otimes \phi_i$ for each $i$. Then its potential is defined as 
  \[
  \Pot(r_1,\ldots,r_k):=\prod_{i=1}^k r(v_i,\phi_i,r_{i+1},\phi_{i+1}).
  \]
  Note that unlike to the above definition of $\cdet$ and $r$, the
  potential depends only on the transvection groups
  $r_1^K,\ldots,r_k^K$ and not on the particular choice of the $v_i$
  and the $\phi_i$. Clearly, the two-way directed cycle
  $(r_1,\ldots,r_k)$ is singular if and only if 
  $\Pot(r_1,\ldots,r_k)=(-1)^k$.

  The above function $r(c,\gamma,d,\delta)$ has the property
  \[
  r(d,\delta,c,\gamma)=\frac{1}{r(c,\gamma,d,\delta)}
  \]
  which can be used to calculate the potential of the symmetric
  difference of two two-way directed cycles glued by a subpath.  More
  concretely, let $(r_1,r_2,\ldots, r_k)$ be a two-way directed cycle
  and let us assume that for some $1\leq i<j\leq k$ there is a two-way
  directed path $r_i,q_1,\ldots,q_l,r_j$. Then we have
  \begin{align*}\tag{Eq. 4}\label{eq:Pot-glued}
  \Pot(r_1,\ldots,r_k)&=\Pot(r_1,\ldots,r_i,q_1,\ldots,q_l,r_j,\ldots,r_k)\\
  &\cdot\Pot(r_i,\ldots,r_j,q_l,\ldots,q_1)
  \end{align*}
  In particular, a cycle obtained by gluing two singular cycles
  is singular itself. 

  Let $Y\supset Y_1$ be any set of transvections and let us assume by
  a way of contradiction that every cycle in $Y$ is singular. (Note
  that this implies that $Y$ does not contain any one-way directed
  edge by Remark \ref{rem:cyclic-det}/(2) and by the strongly
  connected property of $Y$.)  Let $s_1=1+w_1\otimes
  \psi_1,\;s_2=1+w_2\otimes \psi_2\in Y$ be two neighbouring vertices
  in $\Gamma(Y)$ and let $s_3=s_2s_1s_2^{-1}=1+w_3\otimes \psi_3$
  where $w_3=w_1+\psi_2(w_1)w_2$ and
  $\psi_3=\psi_1-\psi_1(w_2)\psi_2$. Then $(s_1,s_3)$ and $(s_2,s_3)$
  are (two-way directed) edges. Let $t=1+u\otimes \mu\in
  Y$ be any transvection in $Y$. A small calculation shows that
  \begin{align*}
  \cdet(w_3,\psi_3,w_1,\psi_1,u,\mu)
& =\psi_3(w_1)\psi_1(u)\mu(w_3)+\psi_1(w_3)\mu(w_1)\psi_3(u)\\
&\hspace{-2cm}=-\psi_1(w_2)\psi_2(w_1)\psi_1(u)(\mu(w_1)+\psi_2(w_1)\mu(w_2))\\
&\hspace{-2cm}\phantom{=}+\psi_2(w_1)\psi_1(w_2)\mu(w_1)
              (\psi_1(u)-\psi_1(w_2)\psi_2(u))\\
&\hspace{-2cm}=-\psi_1(w_2)\psi_2(w_1)\Big(\psi_1(u)\psi_2(w_1)\mu(w_2)
              +\mu(w_1)\psi_1(w_2)\psi_2(u)\Big)\\
&\hspace{-2cm}=-\psi_1(w_2)\psi_2(w_1)\cdot\cdet(w_1,\psi_1,w_2,\psi_2,u,\mu)=0.
  \end{align*}
  Thus, if $(s_3,s_1,t)$ is a cycle, then it is singular. 
  A similar calculation shows that 
  if $(s_3,s_2,t)$ is a cycle, then it is singular, as well. 

  Now, let $r_1,\ldots,r_k\in Y$ such that $(r_1,\ldots,r_k,s_3)$ is a cycle in
  $\Gamma(Y\cup\{s_3\})$ not contained in $\Gamma(Y)$.  Since
  $(s_3,r_1)$ and $(r_k,s_3)$ are edges, both $r_1$ and $r_k$ are
  connected with at least one of $s_1$ and $s_2$. If, for example,
  both of them are connected with $s_1$, then $(s_1,r_1,s_3)$ and 
  $(s_1,s_3,r_k)$ both are singular cycles, so 
  $\Pot(s_3,r_k,s_1)=\Pot(s_3,s_1,r_1)=-1$.
  By using
  (\ref{eq:Pot-glued}) twice we get
  \[
    \Pot(r_1,\ldots,r_k,s_1)=\Pot(r_1,\ldots,r_k,s_3)\cdot 
    \Pot(s_3,r_k,s_1)\cdot\Pot(s_3,s_1,r_1).
  \]
  Furthermore, $(r_1,\ldots,r_k,s_1)$ is a cycle in $\Gamma(Y)$, so it
  is singular by our assumption. Therefore,
  $\Pot(r_1,\ldots,r_k,s_3)=(-1)^{k+1}$, which means that
  $(r_1,\ldots,r_k,s_3)$ is also singular.  Similarly, if, say, $r_1$ is
  connected with $s_1$ and $r_k$ is connected with $s_2$ then by using
  (\ref{eq:Pot-glued}) three times we get
  \begin{align*}
  \Pot(s_1,r_1,\ldots,r_k,s_2)&=\Pot(s_1,r_1,s_3)\cdot\Pot(s_1,s_3,s_2)\\
  &\cdot\Pot(s_2,s_3,r_k)\Pot(r_1,\ldots,r_k,s_3),
  \end{align*}
  which implies that $\Pot(r_1,\ldots,r_k,s_3)=(-1)^{k+1}$, that is, 
  $(r_1,\ldots,r_k,s_3)$ is singular again. 

  Now, let us assume that all cycles in $\Gamma(Y_1)$ are singular.
  The above argument shows that if we repeatedly add new transvections
  to $Y_1$ by conjugating previous ones with each other, then
  we can never get a non-singular cycle. But $Y_1$ generates $SL(V)$
  and all transvections are conjugate in $SL(V)$, which implies that
  sooner or later we get all the transvections of $SL(V)$ that
  way. Since $n\geq 3$, the graph $\Gamma(\mt)$ contains a
  non-singular cycle, which is a contradiction.

  Thus, we proved that $\Gamma(Y_1)$ contains a non-singular cycle.
  Let $(r_1,\ldots,r_k)$ be a non-singular cycle such that $k$ is as
  small as possible. We claim that $(r_1,\ldots,r_k)$ is chordless.
  Otherwise, there is a (two-way directed) edge $(r_i,r_j)$ in
  $\Gamma(Y_1)$ for some $1\leq i<j-1$. 
  Using (\ref{eq:Pot-glued}), we get that at least one of the shorter 
  cycles $(r_1,\ldots,r_i,r_j,\ldots,r_k)$ and $(r_i,r_{i+1},\ldots,r_j)$ 
  must be non-singular, a contradiction.
\end{proof}
Using Lemmas \ref{lem:Y1-non-singular} and \ref{lem:non-sing-to-one-way}
we get
\begin{cor}\label{cor:Y2-one-way-edge}
  There is a $K$-closed set of transvections $Y_2\supset Y_1$ with
  $\ell_{Y_1}(Y_2)\leq O(n)$ such that $\Gamma(Y_2)$ contains a
  one-way directed edge. 
\end{cor}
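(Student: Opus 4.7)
The plan is to combine the two preceding lemmas in a direct way. If $\Gamma(Y_1)$ already contains a one-way directed edge, then we may simply take $Y_2 = Y_1$ and there is nothing to prove; so assume instead that every edge of $\Gamma(Y_1)$ is two-way directed.

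Under this assumption, Lemma \ref{lem:Y1-non-singular} supplies a chordless non-singular cycle $C=(r_1,\ldots,r_k)$ in $\Gamma(Y_1)$, and by the linear-independence argument used at the start of the induction step in the proof of Lemma \ref{lem:non-sing-to-one-way} one has $k \leq n+2$. Applying Lemma \ref{lem:non-sing-to-one-way} to this cycle then produces a pair of transvections $u, v \in C^{2n+1}$ such that $(u,v)$ is a one-way directed edge in $\Gamma(C^{2n+1})$. The key observation at this point is that being a one-way directed edge is an intrinsic property of the pair $(u,v)$ in the ambient graph $\Gr$, so $(u,v)$ automatically remains a one-way directed edge in $\Gamma(Y)$ for any set of transvections $Y$ containing both $u$ and $v$.

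Finally, I would take $Y_2$ to be the $K$-closure of $Y_1 \cup \{u,v\}$. Since $u,v \in C^{2n+1}$ and $C \subset Y_1$, we have $\ell_{Y_1}(\{u,v\}) \leq 2n+1$. The construction of $u$ and $v$ from the $r_i$ in the proof of Lemma \ref{lem:non-sing-to-one-way} proceeds entirely by iterated conjugations of transvections by transvections (cf.\ Equation \ref{eq:conj_tv_with_g}), so the bookkeeping framework described at the end of Section \ref{sec:basic} guarantees that the same length bound applies to the full transvection groups $u^K$ and $v^K$. Hence $\ell_{Y_1}(Y_2) \leq 2n+1 = O(n)$, and $Y_2$ is $K$-closed by construction. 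No step here presents a real obstacle: the substantial work was done in Lemmas \ref{lem:Y1-non-singular} and \ref{lem:non-sing-to-one-way}, and the corollary is essentially an assembly statement combining them with the $K$-closure machinery.
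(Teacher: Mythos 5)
Your proposal is correct and matches the paper's intent exactly: the paper derives this corollary directly from Lemmas \ref{lem:Y1-non-singular} and \ref{lem:non-sing-to-one-way} (with the case split on whether $Y_1$ already has a one-way edge set up just before Lemma \ref{lem:non-sing-to-one-way}), and your filling-in of the length bound $2n+1$ and the $K$-closure bookkeeping is the intended assembly.
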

\begin{lem}\label{lem:Y3-short-diameter}
  There is a $K$-closed set of transvections $Y_3\supset Y_2$ of length 
  $\ell_{Y_2}(Y_3)\leq O(n)$ such that for every $s,t\in \mt$
  there is a directed path from $s$ into $t$ in $Y_3\cup \{s,t\}$ of
  length at most $2$.
\end{lem}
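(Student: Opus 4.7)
My plan is to reformulate the required ``diameter 2'' property as a combinatorial condition on the axes and coaxes of the transvections in $Y_3$, isolate a sufficient family of transvections whose inclusion in $Y_3$ guarantees the property, and then construct that family from $Y_2$ using the one-way edge provided by Corollary \ref{cor:Y2-one-way-edge} together with Lemma \ref{lem:basic_op_eq}(b).

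Writing $s = 1+c\otimes\gamma$ and $t = 1+d\otimes\delta$, a directed path of length at most $2$ from $s$ to $t$ in $\Gamma(Y_3\cup\{s,t\})$ exists iff either $\delta(c)\neq 0$ (a direct edge) or some $y=1+v\otimes\phi\in Y_3$ satisfies both $\phi(c)\neq 0$ and $\delta(v)\neq 0$. So the lemma reduces to the following: for every nonzero $c\in V$ and every hyperplane $\ker\delta\subset V$ containing $c$, some $y=1+v\otimes\phi\in Y_3$ has $\phi(c)\neq 0$ and $v\notin\ker\delta$. Using the basis $a_1,\ldots,a_n$ of $V$ coming from $s_i=1+a_i\otimes\alpha_i\in Y_1\sbs Y_2$ (Lemma \ref{lem:strongly_connected}(1)) and its dual basis $a_1^*,\ldots,a_n^*\in V^*$, this is guaranteed if $Y_3$ contains a transvection $y_{ij}=1+a_i\otimes a_j^*$ for every pair $i\neq j$: writing $c=\sum c_k a_k$ and $\delta=\sum \delta_k a_k^*$, the equation $\delta(c)=\sum c_k\delta_k=0$ forces (since $c,\delta\neq 0$) the existence of $i\neq j$ with $\delta_i\neq 0$ and $c_j\neq 0$, and then $a_j^*(c)=c_j\neq 0$ and $\delta(a_i)=\delta_i\neq 0$ as required.

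To realize each $y_{ij}$ (and thus its whole $K$-closure) at length $O(n)$ over $Y_2$, the key tool is that if $(x',y')$ is a one-way edge in $Y_2$ with $x'=1+c'\otimes\gamma'$ and $y'=1+d'\otimes\delta'$, then by Lemma \ref{lem:basic_op_eq}(b) the commutator $[y',x']=1+\delta'(c')\cdot d'\otimes\gamma'$ is a transvection whose axis is that of $y'$ and whose coaxis is that of $x'$, so the axis and coaxis of the output can be prescribed independently. Starting from the one-way edge $(x_0,y_0)$ in $Y_2$, I would iteratively conjugate $y_0$ by transvections $s_k^{\mu}\in Y_2$; by Equation \ref{eq:conj_tv_with_g} this transforms the axis of $y_0$ by adding multiples of $a_k$, so that after at most $n$ conjugations the axis can be moved to any prescribed $a_i$. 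Dually, conjugating $x_0$ by suitable $t_k^\mu\in Y_2$ steers its coaxis to $a_j^*$ in at most $n$ steps. Forming the commutator then delivers $y_{ij}$ in $O(n)$ multiplications from $Y_2$; $K$-closure is inherited from the identity $[y^K,x]=[y,x]^K$ noted at the end of Section \ref{sec:basic}.

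The main obstacle is maintaining the one-way edge condition throughout these adjustments. Each conjugation alters both the axis and the coaxis of the affected transvection, and may therefore turn the original one-way edge into a two-way one or destroy it altogether; without a one-way edge, Lemma \ref{lem:basic_op_eq}(b) cannot be invoked and the commutator may degenerate to the identity. The remedy is to intersperse scalar corrections of the type furnished by Lemma \ref{lem:basic_op_graph}(d), which, for a given configuration $x,y,z$ of three transvections admitting the wrong set of edges, produces a suitable scalar power $z^\lambda$ disconnecting the unwanted direction. Organising these axis--adjustments, coaxis--adjustments, and scalar corrections into an $O(n)$-length sequence whose final commutator realises each $y_{ij}$ is the essential combinatorial work of the proof.
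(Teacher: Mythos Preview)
Your reformulation is correct, and it is true that having all $y_{ij}=1+a_i\otimes a_j^*$ in $Y_3$ would more than suffice. But this target is far too strong for the lemma at hand: once you have every $y_{ij}$, a further $O(n)$ steps already produce \emph{every} transvection, so your plan amounts to proving $\ell_{Y_2}(\mt)=O(n)$. The paper only reaches $\mt$ after the whole chain $Y_2\to Y_3\to Y_4\to Y_5\to\mt$, at cumulative cost $O(n^{8})$ over $Y_2$; collapsing all of that to $O(n)$ would be a dramatic improvement, and nothing in your outline supports it.

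The concrete gap is the ``steering'' step. Conjugating by $s_k^{\mu}$ moves the current axis by a multiple of $a_k$ only when $\alpha_k$ is nonzero on that axis; you give no argument that an arbitrary starting axis can be driven to a prescribed $a_i$ in $O(n)$ such moves, and simple examples already force backtracking. More seriously, the axis-steering on $y_0$ and the coaxis-steering on $x_0$ are independent processes, and at the end you need the resulting pair $(x',y')$ still to be an edge so that the commutator is a proper transvection; Lemma~\ref{lem:basic_op_graph}(d) \emph{removes} edges rather than creates them, so it does not address this. You yourself flag this as ``the essential combinatorial work of the proof'' and then stop --- that is the missing idea, not a routine detail.

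The paper's argument is much lighter and, notably, does not use the one-way edge at all. For each fixed pair $s,t\in\mt$ it takes a shortest directed path $s,r_1,\ldots,r_k,t$ inside $Y_2\cup\{s,t\}$ (strong connectivity from Lemma~\ref{lem:strongly_connected}(4)); minimality forbids forward shortcuts, which forces $v_1,\ldots,v_k$ to be linearly independent and hence $k\le n$. The single conjugate $r_{s,t}=r_k\cdots r_2\,r_1\,r_2^{-1}\cdots r_k^{-1}\in Y_2^{\,2k-1}$ then satisfies that $s,r_{s,t},t$ is a directed path (Lemma~\ref{lem:basic_op_graph}), and one sets $Y_3=Y_2\cup\{r_{s,t}^K:s,t\in\mt\}$. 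No global family like $\{y_{ij}\}$ is needed; the construction is pair-by-pair and costs at most $2n-1$ per pair.
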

\begin{proof}
  Let $s,t\in\mt$ be two transvections. Since $Y_2\cup\{s,t\}$ is
  strongly connected, there is a directed path from $s$ into $t$ in
  $Y_2\cup\{s,t\}$. Let $s,r_1,\ldots,r_k,t$ be a directed path of
  shortest length with $r_i=1+v_i\otimes\phi_i$ for every $1\leq i\leq
  k$, so there is no edge from $r_i$ into $r_j$ if $1\leq i<j\leq k$.
  As in the proof of Theorem \ref{thm:finding_a_tv_group}, we get that
  $\{v_1,\ldots,v_k\}\subset V$ is linearly independent set, so $k\leq
  n$.  Let $r_{s,t}:=r_k\ldots r_2r_1r_2^{-1}\ldots r_k^{-1}$.  Using
  Lemma \ref{lem:basic_op_graph} repeatedly, we get that $s,r_{s,t},t$
  is a path.  Clearly, $\ell_{Y_2}(r_{s,t})\leq 2k-1\leq 2n$ also
  holds. Thus, the set
  \[
  Y_3:=Y_2\cup\{r_{s,t}^K\,|\,s,t\in \mt,\ (s,t)\textrm{ is not an edge}\}
  \]
  has the required properties. 
\end{proof}
\begin{lem}\label{lem:Y4-one-way-neighbours}
  There is a $K$-closed set of transvections $Y_4\supset Y_3$ with
  $\ell_{Y_3}(Y_4)\leq O(1)$ such that for every
  transvection $r\in \mt$, there are one-way edges $(r,e_r)$ and
  $(s_r,r)$ for some $e_r,s_r\in Y_4$.
\end{lem}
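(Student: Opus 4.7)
The plan is to leverage the single one-way edge $(a,b) \in Y_2 \subset Y_3$ furnished by Corollary \ref{cor:Y2-one-way-edge}, together with the short-diameter property of Lemma \ref{lem:Y3-short-diameter}, to manufacture one-way neighbours at every transvection $r \in \mt$. First I form the commutator $c := [a,b] \in Y_3^4$, which is a transvection whose out-neighbourhood and in-neighbourhood agree with those of $a$ and $b$ respectively, by Lemma \ref{lem:basic_op_graph}(e). Thus $\{a, b, c\} \subset Y_3^{O(1)}$ constitutes a small initial family of transvections with a rich combined edge pattern, and it is natural to try to take $e_r$ and $s_r$ from $\{a,b,c\}$ or from short-word perturbations thereof.

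For each $r \in \mt$, I would carry out a case analysis on the six possible edges between $r$ and $\{a,b,c\}$. In most configurations one of these six is already a one-way edge (in or out of $r$), and the corresponding vertex serves directly as $e_r$ or $s_r$. The remaining ``bad'' configuration is the one in which every relevant edge between $r$ and $\{a,b,c\}$ is either two-way or absent, and this is where the real work lies.

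In these bad configurations I modify one of $a, b, c$ by a power conjugation $z^\lambda(\cdot)z^{-\lambda}$ for a suitable $z \in Y_3$ and $\lambda \in K$. By Lemma \ref{lem:basic_op_graph}(d), such a power conjugation can destroy exactly one direction of a two-way edge at $r$ while preserving the opposite direction, provided the relevant triangle (involving $r$, the modified transvection, and $z$) is non-singular. The existence of a suitable $z \in Y_3$ is obtained by applying Lemma \ref{lem:Y3-short-diameter} to connect $r$ to $a$, $b$ or $c$ via $Y_3$ in at most two steps. Non-singularity of the resulting triangle can then be arranged by adjusting the choice of $z$ using the potential calculations from the proof of Lemma \ref{lem:Y1-non-singular}, since an all-singular configuration around $r$ would propagate (via the potential-multiplicativity formula (\ref{eq:Pot-glued})) and contradict the fact that $\Gamma(\mt)$ contains non-singular cycles.

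Each such construction uses only $O(1)$ multiplications of elements of $Y_3$, so every resulting $e_r$ and $s_r$ lies in $Y_3^{O(1)}$. Taking $Y_4$ to be $Y_3$ together with the $K$-closures of all these constructed elements gives $\ell_{Y_3}(Y_4) \leq O(1)$; the $K$-closedness is preserved since commutators and power conjugations act compatibly on transvection groups, as recorded at the end of Section \ref{sec:basic}. The main obstacle will be the bad-configuration case: one must verify that the required $z$ always exists in $Y_3$ with the correct incidence pattern to $r$, to the member of $\{a,b,c\}$ being modified, and to the adversary edge to be broken, and that the triangle can simultaneously be made non-singular. This is the most delicate point, and is precisely where the combined strength of Lemmas \ref{lem:Y1-non-singular}, \ref{lem:Y3-short-diameter} and \ref{lem:basic_op_graph}(d) becomes essential.
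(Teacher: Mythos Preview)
Your strategy is in the right family—use the one-way edge $(a,b)$, form commutators, and repair bad cases by power conjugation—but it takes a detour that leaves a real gap. After introducing $c=[a,b]$ you launch a six-edge case analysis on $r$ versus $\{a,b,c\}$ and then defer the ``bad configuration'' to a non-singularity argument you do not carry out. That step is not obviously sound: to kill one direction of a two-way edge at $r$ via Lemma~\ref{lem:basic_op_graph}(d) you need a conjugator $z$ with a specific incidence pattern, and you must simultaneously ensure the \emph{other} direction at $r$ survives the same conjugation; nothing you wrote guarantees both at once, and the appeal to potentials and Lemma~\ref{lem:Y1-non-singular} does not close this. Non-singularity is simply not needed here.

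The paper's proof is much shorter and bypasses the bad case entirely by organising the argument differently. Rather than comparing $r$ to a fixed triple, it uses Lemma~\ref{lem:Y3-short-diameter} to take a shortest directed path $r=r_0,\ldots,r_k=s$ in $\Gamma(\{r\}\cup Y_3)$, so $k\le 2$, and then propagates the one-way edge $(s,t)$ back along this path. For $k=0$ one has $e_r=t$. For $k=1$ (so $(r,s)$ is an edge and $(s,t)$ is one-way) there are only two subcases: if $(t,r)\notin E(\mt)$ then $(r,[s,t])$ is one-way by Lemma~\ref{lem:basic_op_graph}(e); if $(t,r)\in E(\mt)$ then $t^\lambda s t^{-\lambda}=1+(v_s+\lambda\phi_t(v_s)v_t)\otimes\phi_s$, so $(r,t^\lambda s t^{-\lambda})$ is always an edge (since $(r,s)$ is) while the reverse direction vanishes for the unique $\lambda$ with $\phi_r(v_s)+\lambda\phi_t(v_s)\phi_r(v_t)=0$, which exists because $\phi_t(v_s),\phi_r(v_t)\neq 0$. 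For $k=2$ one applies the $k=1$ step first to $(r_1,s,t)$ and then to $(r,r_1,e_{r_1})$. Each step costs $O(1)$ elements of $Y_3$, and the dual argument produces $s_r$.
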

\begin{proof}
  Let $r\in \mt$ be any transvection and let $(s,t)$ be a one-way
  directed edge in $\Gamma(Y_2)$ (whose existence is guaranteed by
  Corollary \ref{cor:Y2-one-way-edge}). Let $r=r_0,\ldots, r_k=s$
  be a path of shortest length in $\Gamma(\{r\}\cup Y_3$. By the
  construction of $Y_3$, we have $0\leq k\leq 2$. If $k=0$, then
  $(r,t)$ is one-way directed. Now, let $k=1$. Using parts of Lemma
  \ref{lem:basic_op_graph}, we get that if $(t,r)$ is not an edge,
  then $(r,[s,t])$ is a one-way directed edge, while if $(t,r)$ is an
  edge, then $(r,t^{\lambda}st^{-\lambda})$ is a one-way directed edge
  for some $\lambda\in K$. Finally, if $k=2$, then we apply the $k=1$
  twice: First, we can apply it to $r_1,s,t$ we get a one-way directed
  edge $(r_1,e_{r_1})$, then we can apply it to $(r,r_1,e_{r_1})$ to
  get a one-way directed edge $(r,e_r)$.
 
  The existence of a suitable transvection $s_r$ can be proved in an
  analogous way.
\end{proof}
\begin{lem}\label{lem:bounded_graphs}
  Let $r_1=1+v_1\otimes\phi_1,\ldots,r_k=1+v_k\otimes \phi_k$ be a
  directed path of transvections where $k\leq 5$. Let us assume that
  at least one of $(r_1,r_2)$ and $(r_2,r_3)$ is one-way directed for
  $k=3$, while both of $(r_1,r_2)$ and $(r_{k-1},r_k)$ are one-way
  directed for $4\leq k\leq 5$. Furthermore, let
  $Z=\{r_1^K,\ldots,r_k^K\}$.  Then there is a $\psi\in V^*$ such that
  $s:=1+(v_1+v_k)\otimes \psi$ is a transvection with
  $\ell_Z(s^K)\leq c$ for some constant $c$.  Similarly, there is a transvection
  $t=1+w\otimes (\phi_1+\phi_k)$ such that $\ell_Z(t)\leq c$.
\end{lem}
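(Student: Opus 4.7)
The plan is to build, starting from the one-way edge $(r_1, r_2)$, a tower of transvections $a_2, a_3, \ldots, a_k$ with common linear form $\phi_1$ and respective directions $v_2, v_3, \ldots, v_k$, via iterated commutators along the path. Multiplying $r_1$ by a suitable representative of $a_k^K$ will then produce $s = 1 + (v_1 + v_k) \otimes \phi_1$, so the choice $\psi = \phi_1$ will work. The dual (form-sum) statement will follow by running the same construction in the dual space $V^*$.

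To build $a_2$: since $(r_1, r_2)$ is one-way directed we have $\phi_2(v_1) \neq 0$ and $\phi_1(v_2) = 0$, so Lemma \ref{lem:basic_op_eq}(b) yields $[r_2, r_1^\mu] = 1 + \mu\phi_2(v_1)\, v_2 \otimes \phi_1$ for every $\mu \in K$. Since $Z$ is $K$-closed each such expression is a word of length $4$ in $Z$, and letting $\mu$ range over $K$ sweeps out the entire $K$-closure $a_2^K$ of $a_2 := 1 + v_2 \otimes \phi_1$. Propagate: for $i = 3, \ldots, k$, set $a_i := [r_i, a_{i-1}]$. Under the no-chord condition $\phi_1(v_i) = 0$ (i.e.\ $(r_i, r_1) \notin \E$), Lemma \ref{lem:basic_op_eq}(b) applies with $a_{i-1}$ in the role of the conjugated element and gives $a_i = 1 + \phi_i(v_{i-1})\, v_i \otimes \phi_1$. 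By $K$-closure, the full $a_i^K$ is obtained at length at most twice that of $a_{i-1}^K$, and since $k \leq 5$ the total length remains bounded by an absolute constant.

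To finish the primary statement, pick the representative $1 + v_k \otimes \phi_1 \in a_k^K$. Since both $r_1$ and this transvection share the linear form $\phi_1$, Lemma \ref{lem:basic_op_eq}(c) gives
\[
s \;=\; r_1 \cdot (1 + v_k \otimes \phi_1) \;=\; 1 + (v_1 + v_k) \otimes \phi_1,
\]
which is a transvection because $\phi_1(v_1) = 0$ (transvection condition for $r_1$) and $\phi_1(v_k) = 0$ (chordlessness). Hence $\ell_Z(s^K) \leq c$ for some constant $c$. For the dual statement, replay the argument in the dualized setting: swap $v_i \leftrightarrow \phi_i$ in each transvection, which reverses every edge of $\Gr$ and turns the path $(r_1,\ldots,r_k)$ into the reversed dualized path. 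The one-way end-edge hypothesis is symmetric under this duality---both endpoints for $k \geq 4$, at least one endpoint for $k = 3$---so the primary construction in the dual yields a transvection of the form $t = 1 + w \otimes (\phi_1 + \phi_k)$ of length at most $c$ in $Z$.

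The main obstacle is the chord condition $\phi_1(v_i) = 0$ at every iteration of the propagation. If the given path has backward chords $(r_i, r_1) \in \E$ for some $i \geq 3$, a short preprocessing stage is required before Step 2 can proceed: using the various parts of Lemma \ref{lem:basic_op_graph} (notably (d), which kills an edge from $x$ to $y$ by conjugating $y$ with an intermediate $z$) one can replace the offending $r_i$ by a conjugate that avoids the chord, at constant length cost. Since $k \leq 5$, only boundedly many such preprocessing steps can ever be needed, so the $O(1)$ length bound is preserved. The one-way end-edge hypothesis is essential both to launch the commutator chain---without it, $[r_2, r_1]$ would not simplify to a single transvection with form $\phi_1$---and, via the dual argument, to terminate it.
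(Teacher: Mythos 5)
The core commutator-chain idea you use for the chordless situation is exactly the paper's $k=3$ step, and your reduction to the dual for the second half is fine. But the lemma does \emph{not} assume the path is chordless, and the real content of the paper's proof is precisely the case analysis that handles chords; that is where your argument has a genuine gap. Your construction $a_i=[r_i,a_{i-1}]$ only yields a transvection with form $\phi_1$ when $\phi_1(v_i)=0$ for every $i\ge 3$, and when some $(r_i,r_1)$ is an edge the commutator no longer simplifies via Lemma~\ref{lem:basic_op_eq}(b). You acknowledge this and invoke ``preprocessing with Lemma~\ref{lem:basic_op_graph}(d)'', but that lemma requires a vertex $z$ with $(r_i,z),(z,r_1)\in\E$ simultaneously, which is not automatic from the path hypothesis, and conjugating $r_i$ can easily create new edges elsewhere on the path; showing that the configuration can always be cleaned up in $O(1)$ steps \emph{is} the case analysis the paper carries out for $k=4,5$ (e.g.\ the subcase where both $(r_5,r_3)$ and $(r_3,r_1)$ are edges, which needs a two-parameter conjugate $r_4^\mu r_2^\lambda r_3 r_2^{-\lambda}r_4^{-\mu}$, not a single application of (d)).

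In addition, you silently assume that there is no edge between $r_1$ and $r_k$ in either direction; if $(r_1,r_k)$ or $(r_k,r_1)$ is an edge, the product $r_1\cdot(1+v_k\otimes\phi_1)$ is not even a transvection because $\phi_1(v_k)\ne 0$. The paper handles this at the very start by producing $s$ via a single conjugation with a different form $\psi$, so this case must be separated out rather than folded into ``chordlessness''. Until the chord cases (the $(r_1,r_k)$ / $(r_k,r_1)$ edge, and the $(r_i,r_1)\in\E$ obstructions for $k=4,5$) are worked through explicitly, the proof is incomplete.
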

\begin{proof}
  First, if $(r_1,r_k)$ or $(r_k,r_1)$ is an edge, then
  $s=r_k^{1/\phi_k(v_1)}r_1r_k^{-1/\phi_k(v_1)}$ or
  $s=r_1^{1/\phi_1(v_k)}r_kr_1^{-1/\phi_1(v_k)}$ satisfy the claim.
  So for the remainder, we assume that there is no edge between $r_1$
  and $r_k$ in either direction.
  
  If $k=3$ and, say, $(r_1,r_2)$ is a one-way directed edge, then let
  $s_1:=[r_2^{1/\phi_2(v_1)},r_1]=1+v_2\otimes\phi_1$. Now,
  $(s_1,r_3)$ is a one-way directed edge, so
  $s_2:=[r_3^{1/\phi_3(v_2)},s_1]=1+v_3\otimes \phi_1$. Therefore,
  $s:=r_1\cdot s_2=1+(v_1+v_3)\otimes \phi_1$. The case when
  $(r_2,r_3)$ is one-way directed can be handled in an analogous way.

  Now, let $k=4$ and let both $(r_1,r_2)$ and $(r_3,r_4)$ be one-way
  directed.  Choosing
  $s_1=[r_2^{1/\phi_2(v_1)},r_1]=1+v_2\otimes \phi_1$, the case $k=3$
  can be applied to the path $s_1,r_3,r_4$.

  Finally, let $k=5$ and let us assume that both $(r_1,r_2)$ and
  $(r_4,r_5)$ are one-way directed. If $(r_1,r_4)$ is an edge, then the
  case $k=3$ can be applied to the path $r_1,r_4,r_5$. If $(r_4,r_1)$
  is one-way directed, then choosing
  $s_1=[r_1^{1/\phi_1(v_4)},r_4]=1+v_1\otimes \phi_4$ and
  $s_2=[r_5^{1/\phi_5(v_4)},r_4]=1+v_5\otimes \phi_4$ we get
  $s:=s_1\cdot s_2=1+(v_1+v_5)\otimes \phi_4$. So for the remainder,
  we assume that there is no edge between $r_1$ and $r_4$ in either
  direction. Now, if $(r_3,r_1)$ is not an edge then choosing
  $s_1=[r_2^{1/\phi_2(v_1)},r_1]=1+v_2\otimes \phi_1$, the case $k=4$
  can be appplied to the path $s_1,r_3,r_4,r_5$. The case when
  $(r_5,r_3)$ is similar. Finally, let us assume that both $(r_5,r_3)$
  and $(r_3,r_1)$ are edges. Then there is a $\lambda\in K$ such that
  $s_1:=r_4^\lambda r_3r_4^{-\lambda}$ satisfies that $(r_5,s_1)$ is
  one-way directed (If $(r_5,r_3)$ is one-way directed, then
  $\lambda=0$ gives $s_1=r_3$). Now, the case $k=3$ can be applied to
  the path $r_5,s_1,r_1$.

  The existence of a $t=1+w\otimes (\phi_1+\phi_k)$ with
  $\ell_Z(t^K)\leq c$ can be proven in an analogous way.  
\end{proof}
\begin{rem}
  One can check that the above argument provides a value $c=28$.
\end{rem}
\begin{lem}\label{lem:Y5-everything-is-image}
  There is a $K$-closed set of transvections $Y_5\supset Y_4$ with
  $\ell_{Y_4}(Y_5)\leq n^6$ such that 
  for every $0\neq v\in V$ there is a $\phi\in V^*$ such that 
  $1+v\otimes \phi\in Y_5$. Similarly, for every $\psi\in V^*$ there is a
  $u\in V$ with $1+u\otimes \psi\in Y_5$.
\end{lem}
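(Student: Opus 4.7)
I would prove the claim by producing, for each target vector $v=\sum_{i=1}^n c_ia_i\in V\setminus\{0\}$ (written in the basis $a_1,\ldots,a_n$ of Lemma~\ref{lem:strongly_connected}(1)), a transvection of the form $1+v\otimes\phi_v$ as a product of at most $n^6$ elements of $Y_4$, and then taking $Y_5$ to be the union of $Y_4$ with the $K$-closures of these transvections and the analogously constructed duals. The starting ``atoms'' are the scaled basis transvections $t_i:=s_i^{c_i}=1+(c_ia_i)\otimes\alpha_i$, which by the $K$-closedness of $Y_1\subset Y_4$ already lie in $Y_4$ at $Y_4$-length one.

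The combination mechanism is Lemma~\ref{lem:bounded_graphs}: given two transvections $r=1+w\otimes\gamma$, $s=1+u\otimes\beta$ sitting as the endpoints of a directed path $r=r_1,\ldots,r_k=s$ with $k\le 5$ and with the prescribed one-way end-edges, it produces a transvection $1+(w+u)\otimes\psi$ (together with its entire $K$-closure) of length at most $c$ over $\{r_i^K\}$, hence of $Y_4$-length at most $c\cdot\max_i \ell_{Y_4}(r_i)$. The main obstacle I expect is guaranteeing that such a path exists in every instance of the combination step.

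I would produce the path by the following cascade using the properties already established for $Y_3$ and $Y_4$. If $(r,s)$ or $(s,r)$ is already an edge, invoke the initial case of Lemma~\ref{lem:bounded_graphs}. Otherwise Lemma~\ref{lem:Y3-short-diameter} gives a length-$2$ directed path $r,x,s$ with $x\in Y_3\subset Y_4$; if at least one of $(r,x)$, $(x,s)$ is one-way directed, the $k=3$ case of Lemma~\ref{lem:bounded_graphs} applies. In the remaining ``doubly two-way'' subcase I would use Lemma~\ref{lem:Y4-one-way-neighbours} to choose $e_r,s_s\in Y_4$ with $(r,e_r)$ and $(s_s,s)$ one-way directed, and apply Lemma~\ref{lem:Y3-short-diameter} once more to the pair $(e_r,s_s)$. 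This yields a directed path $r,e_r,\ldots,s_s,s$ of length at most $4$ (hence $k\le 5$) whose first and last edges are one-way directed, as required, and whose intermediate vertices all lie in $Y_4$ at $Y_4$-length $1$. Thus each combination multiplies the relevant $Y_4$-lengths by the absolute constant $c$ of Lemma~\ref{lem:bounded_graphs}.

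Finally I would arrange the $n$ atoms $t_1,\ldots,t_n$ (padded with the identity to a power of $2$ if necessary) as the leaves of a balanced binary tree of depth $\lceil\log_2 n\rceil$ and apply the combination routine at each internal node. The root is a transvection of the form $1+v\otimes\phi_v$ whose $Y_4$-length is at most $c^{\lceil\log_2 n\rceil}=n^{\log_2 c}\le n^6$, since Lemma~\ref{lem:bounded_graphs} gives $c=28$ and $\log_2 c<5$. Running this for every $0\ne v\in V$, and the dual construction for every $\psi\in V^*$ using the ``$t=1+w\otimes(\phi_1+\phi_k)$'' half of Lemma~\ref{lem:bounded_graphs}, yields the $K$-closed $Y_5\supset Y_4$ with $\ell_{Y_4}(Y_5)\le n^6$. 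The technical heart of the argument is the path-producing case analysis in the combination step; once that is in place, the tree accounting is routine.
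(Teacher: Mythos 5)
Your proposal is correct and follows essentially the same route as the paper: the paper also decomposes $v$ over the basis $a_1,\ldots,a_n$ coming from Lemma~\ref{lem:strongly_connected}(1), combines the two halves at each stage via Lemma~\ref{lem:bounded_graphs} along a short path manufactured from Lemmas~\ref{lem:Y3-short-diameter} and~\ref{lem:Y4-one-way-neighbours}, and its induction on $k$ with the $\lceil k/2\rceil$ split is exactly your balanced binary tree. Your explicit case analysis for producing the path with the required one-way end-edges is a correct (and slightly more detailed) version of what the paper leaves implicit, and the length accounting agrees up to the same harmless constant-chasing at small $n$.
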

\begin{proof}
  We only prove the first claim, the second one can be proved in a
  similar way.  Let $0\neq v\in V$ be fixed.  By Theorem
  \ref{lem:strongly_connected}, there are transvections
  $s_1=1+a_1\otimes \alpha_1,\ldots,s_k=1+a_k\otimes \alpha_k\in Y_1$
  such that $k\leq n$ and $v=\sum_{i=1}^k a_i$. We prove the existence
  of a $\phi\in V^*$ such that $\ell_{Y_4}(1+v\otimes \phi)\leq k^6$
  by using induction on $k$. The claim is clearly true for $k=1$.  For
  an arbitrary $k\leq n$ let $v_1=\sum_{i=1}^{\lceil k/2\rceil} a_i$
  and $v_2=\sum_{i=\lceil k/2\rceil+1}^k a_i$. By induction on $k$,
  there are $r_1=1+v_1\otimes \phi_1$ and $r_2=1+v_2\otimes\phi_2$ for
  some $\phi_1,\phi_2\in V^*$ such that $\ell_{Y_4}(r_1,r_2)\leq
  \lceil k/2\rceil^6$.  Using Lemma \ref{lem:Y4-one-way-neighbours}
  and Lemma \ref{lem:Y3-short-diameter} we get a path from $r_2$ to
  $r_1$ in $\Gamma(r_1,r_2,Y_4)$ satisfying the conditions in Lemma
  \ref{lem:bounded_graphs}. Thus, using Lemma \ref{lem:bounded_graphs}
  to this path, we get a transvection $r=1+v\otimes \phi$ with
  $\ell_{r_1,r_2,Y_4}(r)\leq c=28$. Thus, $\ell_{Y_4}(r)\leq
  \ell_{r_1,r_2,Y_4}(r)\cdot \ell_{Y_4}(r_1,r_2)\leq 28\cdot \lceil
  k/2\rceil^6\leq k^6$, as claimed. Let $Y_5$ be the union of all
  transvection groups in $Y_4^{(n^6)}$. Then the conclusion of the
  lemma holds for $Y_5$.
\end{proof}
\begin{lem}\label{lem:bounded_graphs_2}
  Let $\mathcal T$ be the set of all transvections. Then 
  $\ell_{Y_5}(\mathcal T)=O(1)$.
\end{lem}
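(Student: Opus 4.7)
For a target $t = 1 + v\otimes\phi \in \mt$, my plan is to produce $t$ (up to a $K$-scalar) as the output of Lemma~\ref{lem:bounded_graphs} applied to a carefully chosen path whose endpoint vectors sum to $v$ and whose initial functional equals $\phi$, and then invoke the $K$-closedness of $Y_5$.

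By Lemma~\ref{lem:Y5-everything-is-image}, the transvection $R_\phi := 1 + u_\phi\otimes\phi$ lies in $Y_5$ for some $u_\phi$. If $v \in K u_\phi$, then $t \in R_\phi^K \subset Y_5$ and we are done, so I assume $v$ and $u_\phi$ are linearly independent. I would set $r_1 := R_\phi$ and $r_k := 1 + (v-u_\phi)\otimes\phi_{v-u_\phi} \in Y_5$, noting $v_1+v_k = v$ and $\phi_1 = \phi$, and then construct a directed path $r_1 \to r_2 \to \cdots \to r_k$ in $\Gamma(Y_5)$ of length at most $4$: by Lemma~\ref{lem:Y4-one-way-neighbours} I pick $r_2 := e_{r_1}$ and $r_{k-1} := s_{r_k}$ in $Y_4 \subset Y_5$ so that $(r_1,r_2)$ and $(r_{k-1}, r_k)$ are one-way directed, and by Lemma~\ref{lem:Y3-short-diameter} I connect $r_2$ to $r_{k-1}$ by a directed subpath of length at most $2$ in $Y_3 \subset Y_5$.

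Applying Lemma~\ref{lem:bounded_graphs} to this path produces a transvection $s = 1 + v\otimes\psi \in Y_5^{O(1)}$. Inspecting its proof, the base step $s_1 = [r_2^{1/\phi_2(v_1)}, r_1] = 1 + v_2\otimes\phi_1$ and the iterated reductions preserve the functional $\phi_1$ through each product-and-commutator combination; so in the branch of the case analysis where the $(r_1,r_2)$-one-way property persists down to the $k=3$ base case, the output $\psi$ is a $K$-scalar of $\phi_1 = \phi$, and hence $t \in s^K \subset Y_5^{O(1)}$.

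The hard part will be ensuring this favourable branch always applies: it requires intermediate non-edges $(r_i, r_1) \notin \E$ for $3 \le i \le k-1$, which can fail for a generic path but may be remedied by adjusting the intermediate vertices via Lemma~\ref{lem:basic_op_graph}. In any residual case where Lemma~\ref{lem:bounded_graphs} outputs a $\psi$ that is not a $K$-scalar of $\phi$, Lemma~\ref{lem:basic_op_eq}(b) applied to $[s, R_\phi]$ gives $1 + \psi(u_\phi)\, v \otimes \phi$ (using $\phi(v) = 0$), which is a $K$-scalar of $t$ whenever $\psi(u_\phi) \neq 0$. If $\psi(u_\phi) = 0$, I would dualize the construction: pick $\chi \in v^\perp \setminus u_\phi^\perp$ (non-empty since $v \notin K u_\phi$), take $1 + u_\chi\otimes\chi \in Y_5$ in place of $R_\phi$ as the new $r_1$, rerun the same scheme to obtain $s' = 1 + v\otimes\chi \in Y_5^{O(1)}$, and then $[s', R_\phi] = 1 + \chi(u_\phi)\, v\otimes\phi$ is a nonzero $K$-scalar of $t$. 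All operations add at most a constant to the length over $Y_5$, yielding $\ell_{Y_5}(\mt) = O(1)$.
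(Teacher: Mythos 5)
Your overall strategy---first manufacture a transvection with the prescribed direction $v$, then repair its linear form to $\phi$ by a commutator with $R_\phi=1+u_\phi\otimes\phi$---is in the same spirit as the paper's proof, which likewise starts from $s_1=1+v_1\otimes\phi$ and $s_2=1+v\otimes\phi_2$ and combines them along a short path. (Incidentally, your detour through Lemma \ref{lem:bounded_graphs} to produce $1+v\otimes\psi$ is unnecessary: Lemma \ref{lem:Y5-everything-is-image} already places such a transvection inside $Y_5$ itself.) Your computation $[s,R_\phi]=1+\psi(u_\phi)\,v\otimes\phi$ is correct and settles the case $\psi(u_\phi)\neq 0$, i.e.\ the case where there is an edge from $R_\phi$ to $s$.

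The remaining case is where the real difficulty lies, and your treatment of it is circular. When $\psi(u_\phi)=0$ you propose to ``rerun the same scheme'' with $r_1=1+u_\chi\otimes\chi$ and claim it outputs $s'=1+v\otimes\chi$. But the scheme only controls the \emph{direction} of its output, not its linear form: Lemma \ref{lem:bounded_graphs} yields $1+v\otimes\psi'$ for some uncontrolled $\psi'$, and you are back to needing $\psi'(u_\chi)\neq 0$; producing a transvection with both a prescribed direction and a prescribed form is exactly the statement being proved. Your other escape route---forcing the ``favourable branch'' of Lemma \ref{lem:bounded_graphs} in which $\psi\in K\phi$---is asserted but not carried out, and arranging the required non-edges $(r_i,r_1)\notin\E$ is precisely the delicate part (indeed, in several branches of that lemma the output form is $\phi_4$ or a combination of $\phi_1$ and $\phi_k$, not $\phi_1$). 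The paper closes this gap differently: it builds a path $s_1=r_1,\dots,r_k=s_2$ with $k\le 5$ whose first and last edges are one-way directed and which avoids certain chords (cases (a)--(c) of its proof, using Lemmas \ref{lem:Y3-short-diameter} and \ref{lem:Y4-one-way-neighbours} together with conjugation adjustments as in Lemma \ref{lem:basic_op_graph}), and then transports $\phi$ all the way to $v$ by an \emph{iterated} commutator of the form $[r_k^{\mu_k},[\dots,[r_2^{\mu_2},r_1]\dots]]=1+v_k\otimes\phi_1$, rather than by a single commutator at the end; the residual configuration ($k=5$ with chords $(r_5,r_3)$ and $(r_3,r_1)$ present) requires a further ad hoc conjugation argument. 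Some version of that path-and-iterated-commutator analysis is what your proposal is missing.
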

\begin{proof}
  Let $0\neq v\in V,\ 0\neq \phi\in V^*$ satisfying $\phi(v)=0$.  We
  need to prove that $1+v\otimes \phi\in Y_5^c$ for some constant $c$.
  By Lemma \ref{lem:Y5-everything-is-image}, there are transvections
  $s_1,s_2\in Y_5$ such that $s_1=1+v_1\otimes \phi,\ s_2=1+v\otimes
  \phi_2$ for some $v_1\in V,\ \phi_2\in V^*$. If $\langle
  v_1\rangle=\langle v\rangle$ or $\langle \phi_2\rangle=\langle
  \phi\rangle$, then the assertion follows since $Y_5$ is
  $K$-closed. Furthermore, if $(s_1,s_2)$ is an edge, 
  then $[s_2^{1/\phi_2(v_1)},s_1]=1+v\otimes \phi$.
  So for the remainder we assume that $\langle
  v_1\rangle\neq\langle v\rangle,\ \langle \phi_2\rangle\neq\langle
  \phi\rangle$ and there is no edge
  between $s_1$ and $s_2$ in either direction.

  We claim that there is a path $s_1=r_1,r_2,\ldots, r_k=s_2$ with
  $k\leq 5$ such that $\ell_{Y_5}(r_1,\ldots,r_k)\leq 5$, and
  none of $(r_2,r_1),\ (r_{k},r_{k-1}),\ (r_k,r_2),\ (r_{k-1},r_1)$ are
  edges. First, let $r_1=s_1,\ r_k=s_2$ (The value of $k$ will be
  specified later).  Since $\langle v_1\rangle \neq \langle v
  \rangle$, there is a $\psi\in V^*$ such that $\psi(v_1)\neq 0,\
  \psi(v)=0$.  Now, by Lemma \ref{lem:Y5-everything-is-image},
  $1+u\otimes \psi\in Y_5$ for some $u\in V$. Furthermore, by Lemma
  \ref{lem:Y4-one-way-neighbours}, there is an $1+u'\otimes \psi'\in
  Y_4$ such that $(1+u\otimes\psi,1+u'\otimes \psi')$ is a one-way
  edge.  Then $[(1+(\psi'(u))^{-1}u'\otimes \psi',1+u\otimes
  \psi]=1+u'\otimes\psi\in Y_5^{4}$.  Let $U=\langle u, u'\rangle$, then 
  $\dim(U)=2$ and 
  \[
  \{1+x\otimes \psi\,|\,x\in U\}=(1+u\otimes
  \psi)^K\cdot (1+u'\otimes \psi)^K\subset Y_5^5.
  \] 
  The resctriction $\phi$ to $U$ must have non-trivial kernel, so
  there is an $0\neq x\in U$ with $\phi(x)=0$. Now, let
  $t=1+x\otimes \psi$. Then $\ell_{Y_5}(t)\leq 5$, $(r_1,t)$ is
  a one-way edge and $(r_k,t)$ is not an edge. In an analogous way, 
  we can prove the existence of a transvection $t'$ such that
  $\ell_{Y_5}(t')\leq 5$, $(t',r_k)$ is 
  a one-way edge and $(t',r_1)$ is not an edge. 
  Now, one of the following holds: 
  \begin{enumerate}
    \item[(a)] If $t^K=(t')^K$, then let $k=3$ and let $r_2$ be any 
      transvection satisfying $r_2^K=t^K=(t')^K$. 
    \item[(b)] If $(t,t')$ is an edge, then let $k=4$ and $r_2=t,\ r_3=t'$. 
    \item[(c)] The previous two cases does not hold. Then let $k=5$,
      $r_2=t,\ r_4=t'$ and $r_3\in Y_5$ a transvection such that
      $r_2,r_3,r_4$ is a path.  (The existence of such an $r_3$
      follows from Lemma \ref{lem:Y3-short-diameter}.)
  \end{enumerate}
  For each $1\leq i\leq k$, let $r_i=1+v_i\otimes \phi_i$ (where
  $\phi_1=\phi$ and $v_k=v$).  Now,
  \[
  [r_3^{1/\phi_3(v_2)},[r_2^{1/\phi_2(v_1)},r_1]]=1+v_3\otimes\phi_1=1+v\otimes
  \phi\textrm{ for }k=3
  \]
  and
  \[
  [r_4^{1/\phi_4(v_3)},[r_3^{1/\phi_3(v_2)},[r_2^{1/\phi_2(v_1)},r_1]]]
  =1+v_4\otimes\phi_1=1+v\otimes\phi\textrm{ for }k=4.
  \]
  Finally, let $k=5$. If $(r_1,r_4)$ or $(r_2,r_5)$ is a (one-way)
  edge, then the case $k=3$ can be used. Otherwise, if $(r_3,r_1)$ or
  $(r_5,r_3)$ is not an edge, then we can use $[r_2,r_1]$ or
  $[r_5,r_4]$ to reduce the problem of generating $1+v_5\otimes
  \phi_1$ to the case $k=4$. Then an iterated commutator similar as
  above works.  So for the remainder we assume that both $(r_5,r_3)$
  and $(r_3,r_1)$ are edges. Since $(r_1,r_2),\ (r_4,r_5)$ are one-way
  edges and none of $(r_1,r_4),\ (r_4,r_1),\ (r_2,r_5),\ (r_5,r_2)$ is
  an edge, there are $\lambda,\mu\in K$ such that $t_1=r_4^\mu
  r_2^\lambda r_3r_2^{-\lambda}r_4^{-\mu}$ satisfies that
  $r_5,t_1,r_1$ is a one-way directed path. 
  Using again Lemma \ref{lem:Y3-short-diameter}, there is a $t_2\in Y_5$
  such that $r_1,t_2,r_5$ is a path. Changing $t_2$ to $r_1t_2r_1^{-1}$ if 
  necessary, we can assume that $(t_1,t_2)$ is an edge. 
  Now, there is a $\lambda\in K$ such that $t_3=t_1^\lambda t_2t_1^{-\lambda}$ 
  satisfies that $r_1,t_3,r_5$ is a path and $(t_3,r_5)$ is a one-way edge. 
  Then $[[r_5^K,t_3],r_1]=(1+v_5\otimes \phi_1)^K=(1+v\otimes \phi)^K$ holds. 
\end{proof}
\begin{proof}[Proof of theorem \ref{thm:main_K}]
  Using Lemmas \ref{lem:strongly_connected},
  \ref{lem:Y3-short-diameter}, \ref{lem:Y4-one-way-neighbours},
  \ref{lem:Y5-everything-is-image}, \ref{lem:bounded_graphs_2},
  \ref{lem:standard_tv} and Corollary \ref{cor:Y2-one-way-edge} we get
  that
  \begin{multline*}
  \ell_X(SL(V))\leq \ell_X(Y_1)\ell_{Y_1}(Y_2)\ldots \ell_{Y_5}(\mathcal T)
  \ell_{\mathcal T}(SL(V))\\
  \leq O(n^2)\cdot O(n)\cdot O(n)\cdot O(1)\cdot O(n^6)\cdot O(1)
  \cdot O(n^2)=O(n^{12}).
  \end{multline*}
\end{proof}
\section{A generalisation of a Theorem of Humphries}\label{sec:Humphries}
As a side-effect of the proof of Theorem \ref{thm:main_K}, we
generalise and extend a theorem of Humphries.
In \cite{Humphries}, the author gave a sufficient and 
necessary condition when a set of transvections 
$S\subset SL(n,p)$ of size $n$ generates $SL(n,p)$. 
(Note that $n$ is the minimal possible size of a generating 
set $S$ of transvections in $SL(n,p)$.)

For this section, let $V$ be an $n>2$-dimensional vector space
over an arbitrary field $K$.
Let $S,S'\subset SL(V)$ be two sets of transvections (of the same size).  
According to Humphrey, we say that
there is a \emph{$t$-equivalence} $S\to S'$ if there is a chain
$S=S_0,S_1,\ldots,S_t=S'$ of sets of transvections such that for each
$1\leq i\leq t$ there are $x,y\in S_{i-1}$ such that $S_i$ is obtained
from $S_{i-1}$ by replacing $x$ to its conjugate $yxy^{-1}$.  Clearly,
if there is a $t$-equivalence $S\to S'$, then $S$ and $S'$ generate
the same subgroup of $SL(V)$.

Let $S=\{t_\alpha=1+v_\alpha\otimes \phi_\alpha\,|\,\alpha\in
I\}\subset SL(n,K)$ be any set of transvections.
We consider the following properties:
\begin{enumerate}
\item[(P 1.)]$\langle v_\alpha\,|\,\alpha\in I\rangle=V$ and 
  $\langle\phi_\alpha\,|\,\alpha\in I\rangle=V^*$.
\item[(P 2.)]$\Gamma(S)$ is strongly connected.
\item[(P 3.)] There is a non-singular cycle in $\Gamma(S)$.
\item[(P 3'.)] There is a $t$-equivalence $S\to S'$ such that 
  $\Gamma(S')$ contains a one-way directed edge. 
\end{enumerate}
Humphries result \cite[Theorem 1.1]{Humphries} says that 
a set of transvections $S\subset SL(n,p)$ of size $n$ generates 
$SL(n,p)$ if and only if $S$ satisfies (P 1.), (P 2.) and (P 3'.). 
One expects that the same assertion should be true without the condition 
$|S|=n$, but Humphries' proof uses this condition in an essential way. 

For a set of transvections $S=\{t_\alpha\,|\,\alpha\in I\}\subset SL(V)$, we
use the notation $S^K$ for the $K$-closure of $S$, i.e.
$S^K=\cup_{\alpha\in I}^m t_\alpha^K$.  Using the arguments of Section
\ref{sec:main_K}, we prove the following.
\begin{thm}\label{thm:Humphries-SL-gen}
  Let $S\subset SL(V)$ be a set of transvections. Then $S^K$
  generates $SL(V)$ if and only if $\{$(P\ 1.),(P\ 2.),(P\ 3.)$\}$ 
  or $\{$(P\ 1.),(P\ 2.),(P\ 3'.)$\}$ holds for $S$. 
\end{thm}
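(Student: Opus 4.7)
The plan is to treat the two implications separately, leveraging the tools developed in Section~\ref{sec:main_K}.

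For the \emph{necessity} direction, I would assume $\langle S^K\rangle = SL(V)$ and derive each property in turn. Property (P~1.) is immediate since the subspace $\langle v_\alpha\mid\alpha\in I\rangle$ is invariant under every $t_\alpha^\lambda\in S^K$ (as $t_\alpha^\lambda(v_\beta)=v_\beta+\lambda\phi_\alpha(v_\beta)v_\alpha$), hence $SL(V)$-invariant, so irreducibility of the natural module forces it to equal $V$, and dually for the $\phi_\alpha$'s. Property (P~2.) follows from the argument of Lemma~\ref{lem:strongly_connected}(4): otherwise one obtains a proper $\langle S\rangle$-invariant subspace, contradicting irreducibility of $\langle S^K\rangle=SL(V)$. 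For the disjunction (P~3.) or (P~3'.), I would split into cases: if $\Gamma(S)$ already contains a one-way directed edge, then (P~3'.) holds with the trivial $t$-equivalence. Otherwise every edge of $\Gamma(S)$ is two-way and the plan is to exhibit a non-singular cycle. Arguing by contradiction, suppose every cycle in $\Gamma(S)$ (equivalently in $\Gamma(S^K)$, by Remark~\ref{rem:cyclic-det}(1)) is singular. Then the potential-preservation calculation at the core of Lemma~\ref{lem:Y1-non-singular} shows that iteratively adjoining conjugates $yxy^{-1}$ can never produce a non-singular cycle; since $\langle S^K\rangle=SL(V)$ acts transitively on transvection groups, one would eventually exhaust $\mt$ while preserving singularity of all cycles, contradicting the existence of non-singular cycles in $\Gamma(\mt)$ for $n\geq 3$.

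For the \emph{sufficiency} direction, I would assume (P~1.), (P~2.), and (P~3.) or (P~3'.). First I would observe that (P~1.) and the irreducibility of $\langle S\rangle$ on $V$ are both preserved by $t$-equivalence: the former because conjugating $t_\beta$ by $t_\gamma$ leaves the total span $\langle v_\alpha\rangle$ unchanged (dually for the functionals), the latter because $\langle S\rangle$ itself is preserved. Moreover (P~1.) together with irreducibility of $\langle S\rangle$ forces $\Gamma(S)$ to be strongly connected via the contrapositive of the Lemma~\ref{lem:strongly_connected}(4) argument, so (P~2.) also transfers under $t$-equivalence. In case (P~3'.) one may thus replace $S$ by a $t$-equivalent $S'$ whose graph contains a one-way edge, while retaining (P~1.), (P~2.) and with $\langle (S')^K\rangle=\langle S^K\rangle$. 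So without loss of generality either (P~3.) holds for $S$ or $\Gamma(S)$ already contains a one-way directed edge. Setting $Y_1:=S^K$, this $Y_1$ is $K$-closed, contains bases of $V$ and $V^*$ (from (P~1.)), has strongly connected transvection graph (from (P~2.)), and satisfies the hypothesis of either Lemma~\ref{lem:non-sing-to-one-way} (via (P~3.)) or Corollary~\ref{cor:Y2-one-way-edge} (via the one-way edge). I would then apply Lemmas~\ref{lem:non-sing-to-one-way}, \ref{lem:Y3-short-diameter}, \ref{lem:Y4-one-way-neighbours}, \ref{lem:Y5-everything-is-image} and~\ref{lem:bounded_graphs_2} in sequence to conclude $\mt\sbs\langle S^K\rangle$, whence $\langle S^K\rangle=SL(V)$.

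The main obstacle will be verifying that the lemmas of Section~\ref{sec:main_K} really rely only on the abstract properties of $Y_1$ listed above---in particular that it suffices to have $\langle Y_1\rangle$ act irreducibly on $V$ rather than equal $SL(V)$ a priori---and that the potential-preservation identity from Lemma~\ref{lem:Y1-non-singular} actually rules out the appearance of one-way non-singular cycles as well as two-way ones during iterative conjugation (recall that one-way directed cycles also satisfy $\cdet\neq 0$ by Remark~\ref{rem:cyclic-det}(2)). Both points should admit a routine verification by re-reading the calculations and graph-theoretic arguments already performed in the main proof.
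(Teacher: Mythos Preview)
Your proposal is correct and follows essentially the same approach as the paper: necessity via irreducibility (this is Theorem~\ref{thm:Humphries-irred}) together with the potential argument of Lemma~\ref{lem:Y1-non-singular}, and sufficiency by noting that the chain of lemmas after Corollary~\ref{cor:Y2-one-way-edge} uses only (P~1.), (P~2.) and the presence of a one-way edge rather than the identity $\langle Y_2\rangle=SL(V)$. The concern you flag---that conjugation might introduce a one-way edge and thereby break the potential calculation---does resolve by the routine check you anticipate: if every edge in $Y$ is two-way and every triangle singular, then for $s_3=s_2s_1s_2^{-1}$ and any $t\in Y$ a short case analysis (splitting on which of $s_1,s_2$ is adjacent to $t$, and using the singularity of $(s_1,s_2,t)$ when both are) shows that any edge between $t$ and $s_3$ is necessarily two-way.
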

Note that if $|K|$ is a prime, then $t^K=\langle t\rangle$ for any
transvection $t$, so $S^K$ can be replaced with $S$ in this Theorem. 
On th other hand, if $L$ is a proper subfield of $L$, and 
$S\subset SL(n,L)\leq SL(n,K)$ then clearly $\langle S\rangle \neq SL(n,K)$ 
regardless of what conditions $S$ satisfy. 

First we prove the following
\begin{thm}\label{thm:Humphries-irred}
  Let $S\subset SL(V)$ be a set of transvections. Then $S$
  generates an irreducible subgroup of 
  $SL(V)$ if and only if $\{$(P\ 1.),(P\ 2.)$\}$ holds for $S$.
\end{thm}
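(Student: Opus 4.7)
The plan is to prove both implications directly by analysing how a transvection can preserve a subspace of $V$; essentially no new machinery is needed beyond the arguments already used in the proof of Lemma~\ref{lem:strongly_connected}.

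For the forward direction, assume $\langle S\rangle$ acts irreducibly on $V$. The subspace $W_1:=\langle v_\alpha\mid \alpha\in I\rangle$ is invariant under every $t_\beta\in S$, since $t_\beta(x)=x+\phi_\beta(x)v_\beta$ stays in $W_1$ whenever $x\in W_1$; irreducibility forces $W_1=V$. Dually, $W_2:=\bigcap_{\alpha\in I}\ker\phi_\alpha$ is pointwise fixed by $\langle S\rangle$, hence $\langle S\rangle$-invariant, and cannot equal $V$ (the $\phi_\alpha$ are nonzero), so $W_2=0$, i.e.\ $\langle\phi_\alpha\mid\alpha\in I\rangle=V^*$. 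This yields (P~1.). For (P~2.), the final paragraph of the proof of Lemma~\ref{lem:strongly_connected} applies verbatim to any set of transvections generating an irreducible subgroup: if $\Gamma(S)$ were not strongly connected, there would be a proper nonempty $Z\subsetneq S$ with no outgoing edges, and then $V_Z:=\langle v_\alpha\mid t_\alpha\in Z\rangle$ would be contained in $\ker\phi_\beta$ for every $\beta\in S\setminus Z$, giving a proper nonzero $\langle S\rangle$-invariant subspace and contradicting irreducibility.

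For the converse, assume (P~1.) and (P~2.), and suppose toward contradiction that some proper nonzero $W\subsetneq V$ is $\langle S\rangle$-invariant. The key observation is that $W$ is stable under $t_\alpha=1+v_\alpha\otimes\phi_\alpha$ if and only if $\phi_\alpha(W)v_\alpha\subseteq W$, so for each $\alpha\in I$ at least one of the two conditions $v_\alpha\in W$ or $\phi_\alpha|_W=0$ holds. Let $Z:=\{\alpha\in I\mid v_\alpha\in W\}$, so that every $\beta\in I\setminus Z$ automatically satisfies $\phi_\beta|_W=0$.

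I would then dispatch three cases. If $Z=I$, then $V=\langle v_\alpha\rangle\subseteq W\subsetneq V$, contradicting (P~1.); if $Z=\emptyset$, then $0\neq W\subseteq \bigcap_{\alpha}\ker\phi_\alpha$, which equals $0$ by (P~1.), again a contradiction; in the remaining case $\emptyset\neq Z\subsetneq I$, and for any $\alpha\in Z$ and $\beta\in I\setminus Z$ we have $\phi_\beta(v_\alpha)=0$ (since $v_\alpha\in W$ and $\phi_\beta|_W=0$), so no edge of $\Gamma(S)$ leaves the vertex set $\{t_\alpha\mid\alpha\in Z\}$, contradicting (P~2.). I anticipate no serious obstacle in this argument: it is a straightforward case split mirroring the two distinct ways in which a transvection can preserve a subspace.
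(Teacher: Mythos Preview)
Your proof is correct. The forward direction is identical to the paper's, including the appeal to the last paragraph of Lemma~\ref{lem:strongly_connected} for (P~2.). For the converse, however, you and the paper organise the argument differently: the paper proceeds directly by picking a nonzero $u\in U$, using (P~1.) to find some $\alpha$ with $\phi_\alpha(u)\neq 0$ so that $v_\alpha\in U$, and then walking along directed paths in $\Gamma(S)$ (using (P~2.)) to conclude that every $v_\beta\in U$, whence $U=V$; you instead argue by contradiction via the dichotomy ``$v_\alpha\in W$ or $\phi_\alpha|_W=0$'' and a three-case split on the set $Z$. Both arguments rest on exactly the same underlying observation about how a transvection can stabilise a subspace, and neither is materially shorter or more general than the other; the paper's version is perhaps slightly more constructive, while yours makes the role of strong connectivity as ``no proper sink component'' a bit more explicit.
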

\begin{proof}
  Let $S=\{t_\alpha=1+v_\alpha\otimes \phi_\alpha\,|\,\alpha\in I\}$ and let 
  $H=\langle S^K\rangle\leq SL(V)$. First, let us assume that 
  $H\leq SL(V)$ is irreducible. Then the $H$-invariant subspaces 
  $\langle v_\alpha\,|\,\alpha\in I\rangle>0$ and 
  $\cap_{\alpha\in I}\ker(\phi_\alpha)<V^*$ must be trivial, which proves (P 1.).
  Furthermore, (P 2.) also holds by the last paragraph of the proof
  of Lemma \ref{lem:strongly_connected}.

  Now, let us assume that $\{$(P\ 1.),(P\ 2.)$\}$ holds for $S$.  Let
  us assume that $0\neq U\leq V$ is an $H$-invariant subspace and let
  $0\neq u\in U$. By (P 1.), there is an $\alpha\in I$ such that
  $u\notin \ker(\phi_\alpha)$. Then $v_\alpha=t_\alpha(u)-u\in U$.
  Let $\beta\in I$ be any element of the index set.  Property (P 2.)
  implies the existence of a path $t_\alpha=t_{\gamma_0},
  t_{\gamma_1}, \ldots, t_{\gamma_k}=t_\beta$ in $\Gamma(S)$.  Using
  induction on $k$, it follows that $v_\beta=v_{\gamma_k}\in \langle
  t_{\gamma_k}(v_{\gamma_{k-1}})-v_{\gamma_{k-1}}\rangle \leq U$.
  Therefore, $U\geq \langle v_\alpha\,|\,\alpha\in I\rangle =V$ by
  property (P 1.).
\end{proof}
\begin{proof}{Proof of Theorem \ref{thm:Humphries-SL-gen}}
  Let $S\leq SL(V)$ be a set of transvections. 
  First let us assume that $S^K$ generates $SL(V)$. Then 
  $\langle S\rangle$ acts irreducibly on $V$, so 
  (P 1.) and (P 2.) follows by Theorem \ref{thm:Humphries-irred}. 
  It is easy to check that
  the proof of Lemma \ref{lem:Y1-non-singular} can be applied to $S^K$. Thus, 
  $\Gamma(S^K)$ contains a chordless non-singular cycle, which implies property 
  (P 3.). Finally, (P 3'.) is a consequence of 
  (P 3.) by Lemma \ref{lem:non-sing-to-one-way}.

  For the converse direction, let use assume that $S$ has properties
  $\{$(P 1.),(P 2.),(P 3.)$\}$.  Then $S$ also has property $\{$(P
  1.),(P 2.),(P 3'.)$\}$ by Lemma \ref{lem:non-sing-to-one-way}.
  Finally, one can check that after Corollary \ref{cor:Y2-one-way-edge}, our
  argument only uses that $Y_2$ has properties $\{$(P 1.),(P 2.)$\}$ and 
  $\Gamma(Y_2)$ contains a one-way directed edge, but it never refers to 
  the identity $\langle Y_2\rangle =SL(V)$. So this argument can be applied to 
  $S^K$ instead of $Y_2$ to prove that $\langle S^K\rangle =SL(V)$. 
\end{proof}
As a consequence of Theorems \ref{thm:Humphries-irred}, 
\ref{thm:Humphries-SL-gen} and the main result of \cite{McLaughlin} 
we get 
\begin{cor}
  Let us assume that $K\neq \FF 2$ and $S\subset SL(V)$ is a
  set of transvections. Then $S^K$ generates $Sp(V)$ is and only if 
  $S$ has properties (P 1.) and (P 2.) but it does not have (P 3.).
\end{cor}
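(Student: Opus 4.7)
The plan is to combine Theorems \ref{thm:Humphries-irred} and \ref{thm:Humphries-SL-gen} with the same special case of \cite{McLaughlin} already exploited in the proof of Lemma \ref{lem:strongly_connected}.

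For the ``only if'' direction, suppose $\langle S^K\rangle=Sp(V)$. Since $n\geq 3$, the group $Sp(V)$ acts irreducibly on $V$, so Theorem \ref{thm:Humphries-irred} immediately yields properties (P\ 1.) and (P\ 2.) for $S$. Because $Sp(V)$ is a proper subgroup of $SL(V)$, Theorem \ref{thm:Humphries-SL-gen} then forces (P\ 3.) to fail for $S$, completing this direction.

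For the converse, assume that $S$ has properties (P\ 1.) and (P\ 2.) but not (P\ 3.), and set $H:=\langle S^K\rangle$. By Theorem \ref{thm:Humphries-irred}, $H$ is an irreducible subgroup of $SL(V)$, and by construction it is generated by transvection groups. Invoking the main result of \cite{McLaughlin} in the form already used in Lemma \ref{lem:strongly_connected} (which requires $K\neq \FF 2$), we obtain $H\geq Sp(V)$ for some non-degenerate alternating form on $V$. On the other hand, Theorem \ref{thm:Humphries-SL-gen} together with the failure of (P\ 3.) gives $H\neq SL(V)$. Since $Sp(V)$ is a maximal subgroup of $SL(V)$ (a standard fact from the theory of classical groups, which I would cite rather than reprove), the chain $Sp(V)\leq H\subsetneq SL(V)$ collapses to $H=Sp(V)$.

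The only non-routine input is the appeal to McLaughlin's classification, exactly as in Lemma \ref{lem:strongly_connected}; the restriction $K\neq \FF 2$ is essential here, as pointed out in the remark after that lemma, and explains why the corollary is stated under the same hypothesis. Everything else is a direct bookkeeping combination of Theorems \ref{thm:Humphries-irred} and \ref{thm:Humphries-SL-gen} with the maximality of $Sp(V)$ in $SL(V)$, so no further calculations are needed.
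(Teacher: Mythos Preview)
Your overall strategy matches the paper's: it simply records the corollary as a consequence of Theorems \ref{thm:Humphries-irred}, \ref{thm:Humphries-SL-gen} and McLaughlin's classification, and your two directions unpack exactly that. The ``only if'' direction is fine.

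There is, however, a genuine gap in your ``if'' direction. You invoke only the weak consequence $H\geq Sp(V)$ of McLaughlin (as in Lemma \ref{lem:strongly_connected}) and then close the argument by asserting that $Sp(V)$ is maximal in $SL(V)$. Over an arbitrary field this is false. For example, with $n=4$ over any field $K$ of characteristic $\neq 2$, the element $g=\operatorname{diag}(1,1,-1,-1)$ (in a symplectic basis $e_1,e_2,e_3,e_4$ with $\omega(e_i,e_{i+2})=1$) lies in $GSp(V)\cap SL(V)$ with similitude factor $-1$, so $Sp(V)\subsetneq GSp(V)\cap SL(V)\subsetneq SL(V)$. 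More generally, whenever $\gcd(n/2,|K^\times|)>1$ (or $K$ has nontrivial $(n/2)$-th roots of unity) the normaliser of $Sp(V)$ in $SL(V)$ strictly contains $Sp(V)$, so maximality fails; this is not a fact you can cite.

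The fix is immediate and is what the paper has in mind: use the \emph{full} McLaughlin theorem rather than its weak corollary. For $K\neq\FF 2$, McLaughlin's result says that an irreducible subgroup of $SL(V)$ generated by transvection subgroups is exactly $SL(V)$ or $Sp(V)$. Combined with $H\neq SL(V)$ from Theorem \ref{thm:Humphries-SL-gen}, this gives $H=Sp(V)$ directly, with no appeal to maximality. Replace your last step by this and the proof is complete.
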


\end{document}